\newtheorem{theorem}{Theorem}[section]
\newtheorem{lemma}{Lemma}[section]
\newtheorem{corollary}{Corollary}[section]
\newtheorem{remark}{Remark}[section]
\renewcommand{\theequation}{\thesection.\arabic{equation}}
\numberwithin{figure}{section}
\numberwithin{table}{section}
\def\XXint#1#2#3{{\setbox0=\hbox{$#1{#2#3}{\int}$}
\vcenter{\hbox{$#2#3$}}\kern-.51\wd0}}
\begin{document}
\title[A DG method for Poisson--Nernst--Planck systems]{A free energy satisfying discontinuous Galerkin method for one-dimensional Poisson--Nernst--Planck systems
}
\author[H.~Liu and Z.-M. Wang]{Hailiang Liu$^\dagger$ and Zhongming Wang$^\ddagger$\\  \\
 }
\address{$^\dagger$Iowa State University, Mathematics Department, Ames, IA 50011} \email{hliu@iastate.edu}
\address{$^\ddagger$ Florida International University,  Department of Mathematics and Statistics,  Miami, FL 33199}
\email{zwang6@fiu.edu}
\subjclass{35K40, 65M60,  65M12,  82C31.}
\keywords{Poisson-Nernst-Planck equation, free energy, discontinuous Galerkin methods}
\begin{abstract} We design an arbitrary-order free energy satisfying discontinuous Galerkin (DG) method
for solving time-dependent Poisson-Nernst-Planck systems. Both the semi-discrete and fully discrete DG methods are shown to satisfy the corresponding discrete free energy dissipation law for positive numerical solutions. Positivities of numerical solutions are enforced by an accuracy-preserving limiter in reference to positive cell averages. Numerical examples are presented to demonstrate the high resolution of the numerical algorithm and to illustrate  the proven properties of mass conservation,  free energy dissipation, as well as the preservation of steady states.
\end{abstract}

\maketitle

\section{Introduction}
In this paper,  we develop an arbitrary-order free energy satisfying numerical method
for solving the initial boundary value problem of the Poisson--Nernst--Planck (PNP) system,
 \begin{subequations}\label{PNP}
\begin{align}
  \partial_t c_i & = \nabla \cdot(\nabla c_i+ q_i c_i \nabla \psi)  \quad  x\in  \Omega, \; t>0 \\
- \Delta \psi & = \sum_{i=1}^m q_i c_i  +\rho_0(x),  \quad x\in  \Omega, \; t>0, \\
c_i(0, x) & =c_i^{\rm in}(x),  \quad x\in \Omega,\\
   \frac{\partial \psi}{\partial  \textbf{n}} & =\sigma, \quad \frac{\partial c_i}{\partial  \textbf{n}}+q_ic_i \frac{\partial \psi}{\partial  \textbf{n}} =0,\quad
 \quad x\in \partial\Omega,\; t>0,
\end{align}
\end{subequations}
where $c_i=c_i(t, x) $ is the local concentration of $i^{th}$ charged molecular or ion species with charge $q_i$ ($1\leq i\leq m$) at the spatial point $x$ and time $t$,   $\Omega \subset \mathbb{R}^d$  denotes a connected closed domain with smooth boundary $\partial\Omega$,  $\psi=\psi(t, x)$ is the electrostatic potential governed by the Poisson equation subject to the Neumann boundary data $\sigma$ and the charge density  that consists of both fixed charge $\rho_0$ and mobile ions,
the latter being a linear combination of all the concentrations $c_i$.  Here $\textbf{n}$ is the unit outward normal vector on the domain boundary $\partial \Omega$.  In this system,  the diffusion coefficient, the thermal energy and the dielectric coefficient have been normalized in a dimensionless manner.

The side conditions are necessarily compatible, i.e.,
\begin{equation}\label{compatibility}
\int_{\Omega} \left(\sum_{i=1}^m q_i c_i^{\rm in}(x) +\rho_0(x)\right)dx+\int_{\partial \Omega}\sigma ds=0,
 \end{equation}
 for the solvability of the problem.

The PNP system is a mean field approximation of diffusive molecules or ions, and  consists of Nernst-Planck (NP) equations that describe  the drift and diffusion of ion species, and the Poisson equation that describes the electrostatics interaction. In the process of charge transport the fluxes of charge carriers are driven exclusively by processes of diffusion and electric drift. This description using the flux traces back to Nernst \cite{Ne89} and Planck \cite{Pl80}, and is accurate for modeling systems with point charges.  Applications of this system are found in electrical engineering and electronkinetics \cite{ Mo83, Ma86, MRS90, Je96,  Da97, Li04},
  electrochemistry \cite{Gl42},  and biophysics \cite{Hi01, EL07}.

Three main  properties of the solution to (\ref{PNP}) are the non-negativity, mass conservation and the free energy dissipation, i.e.,
\begin{subequations}\label{str}
\begin{align}
& c_i^{\rm in}(x)  \geq 0  \Longrightarrow  c_i(t, x) \geq 0 \quad \forall t>0, \; x\in \Omega,\\
& \int_{\Omega} c_i(t,x)\,dx=\int_{\Omega} c_i^{\rm in}(x)\,dx \quad \forall t>0,\\
&  \frac{d}{dt}  F = -\sum_{i=1}^m \int_{\Omega} c_i^{-1}|\nabla c_i +q_ic_i \nabla \psi|^2 dx +\int_{\partial \Omega} \partial_t \sigma \psi ds,
\end{align}
\end{subequations}
where the  free energy $ F$ is defined by
\begin{align}\label{free}
F =  \int_{\Omega}  \sum_{i=1}^m  c_i {\rm log} c_i dx +\frac{1}{2}\int_{\Omega} |\nabla_x \psi|^2 dx.
\end{align}
The free energy contains both entropic part and  the interaction part: $c_i {\rm log} c_i$ is the entropy related to the Brownian motion of each ion species,  and $\frac{1}{2}\int_{\Omega} |\nabla_x \psi|^2 dx$ is the electrostatic potential of the Coulomb interaction between charged ions.  For the PNP system, 
the density is expected to converge to the equilibrium solution in a closed system  (e.g. $\sigma=0$)  regardless of how initial data are distributed.

These nice mathematical features are crucial for the analytical study of the PNP system.  For instance, 
by some energy estimate with the control of the free energy dissipation, the solution is shown to  converge to the thermal equilibrium state as time becomes large, if the boundary conditions are in thermal equilibrium (see, e.g.,  \cite{GG96}).  Long time behavior was studied in \cite{BHN94},  and further in \cite{AMT00, BD00} with refined convergence rates.  Results for the drift-diffusion model regarding existence and asymptotic studies in the case of different boundary conditions may be found in \cite{GG86, FI95a, FI95b}.

\subsection{Existing and proposed methods} Due to the wide variety of devices modeled by the PNP equations, computer simulation for this system of differential equations is of great interest. In addition to being more computationally efficient, PNP models more easily incorporate certain types of boundary conditions that arise in physical systems, such as boundaries of fixed concentration or electrostatic potential. However, the PNP equations present difficulties when computing approximate solutions: it is a strongly coupled system of  $m+ 1$ nonlinear equations, so that computational efficiency plays a critical role in applications of a numerical solver. The transport equations are often convection-dominated,  numerical simulation may produce negative ion concentrations or oscillations in the computed solution, if not  properly addressed.

 In the literature, there are different numerical approximations available for the steady state Poisson-Nernst-Planck equations, see e.g. \cite{ANRR05, BMMPSW05, Je96, Ma86, MRS90}.  Computational algorithms for time-dependent PNP systems have also been constructed for both one-dimensional and  two or three-dimensional models in various chemical/biological applications, and have been combined with the Brownian Dynamics simulations. The proposed algorithms range from finite difference methods \cite{CC65, KCN99,  CCK00, SL01, SLL03, BSDK09, ZCW11, MG14}  to finite element methods \cite{BZHBHM07, LHMZ10, CCAO14, PS09, MXL16}.  Many of existing algorithms are introduced to handle specific settings in complex applications, in which one may encounter different numerical obstacles, such as discontinuous coefficients, singular charges, geometric singularities, and nonlinear couplings to accommodate various phenomena  exhibited by biological ion channels. For a broader overview of proposed algorithms, we refer the interested reader to a recent review article \cite{WZCX12}.

Given the existing rich literature, the most distinct feature of this work is the use of properties (\ref{str}a)-(\ref{str}c) as a guide to
design an arbitrary high order algorithm  to efficiently simulate the solution at large times.  The most related works  to the present one are \cite{LW14, LW15, MXL16}  and in spirit \cite{GG96, CLP03, CF07,LY12, LY14a, LY14b}.
%
 The second order finite difference method introduced in \cite{LW14} satisfies all three properties in (\ref{str}) at the discrete level. For the class of nonlinear Fokker-Planck (NFP) equations
\begin{align}\label{fp+}
\partial_t c & =\nabla_x\cdot (f(c) \nabla_x (\psi(x)+H'(c)) ),
\end{align}
 with  the potential $\psi$ given, the high order discontinuous Galerkin method introduced in \cite{LW15} is shown to satisfy the discrete entropy dissipation law. If the potential $\psi$ is governed by the Poisson equation such as (\ref{PNP}b),  $f(c)=c$ and $H(c)=c\log c$, then \eqref{fp+} becomes the PNP system \eqref{PNP} with single species. A finite element method to the PNP system is recently introduced in \cite{MXL16} using a logarithmic transformation of the charge carrier densities, while the involved energy estimate resembles the physical energy law that governs the PNP system in the continuous case.   The main objective of this work is to develop a high order DG method that incorporates mathematical features (\ref{str})  to handle the coupling of the Poisson equation and the NP system so that the numerical solution remains faithful for long time simulations.


The discontinuous Galerkin (DG) method we present here is a class of finite element methods, using a completely discontinuous piecewise polynomial space for the numerical solution and the test functions. One main advantage of the DG method is the flexibility afforded by local approximation spaces combined with the suitable design of numerical fluxes crossing cell interfaces. More general information about DG methods for elliptic, parabolic, and hyperbolic PDEs can be found in the recent books and lecture notes see,e.g. \cite{ HW07,Ri08, Shu09}. The DG discretization we use here is motivated by  the direct discontinuous Galerkin (DDG) method proposed in \cite{LY09, LY10}. The main feature in the DDG schemes lies in numerical flux choices for the solution gradient, which involve higher order derivatives evaluated across cell interfaces. The entropy satisfying methods recently developed  in \cite{LW14, LW15, LY12, LY14a, LY14b}  are the main references for the present work. 

The main results in this paper include the formulation of an arbitrary-order DG methods, proofs of the discrete free energy dissipation and mass conservation for both semi-discrete and fully discrete DG methods, and an accuracy-preserving limiter to ensure the positivity preserving property for $c_i$. Numerical results are in excellent agreement with the analysis.

We point out that different boundary conditions  are used in applications, and they are important in driving the system out of equilibrium and produce the non-vanishing ionic fluxes. The numerical method presented in this work can be easily modified to incorporate different boundary conditions, through selection of appropriate boundary fluxes (see section 2).

\subsection{Related models}
The PNP system has many variants by simply altering the definition of the charge carrier flux. For example, the PNP system coupled with the Navier-Stokes equation is a basic model in the study of electronkinetics \cite{Li04}.  It  reduces to simpler models when  some of ion species become trivial or steady.  For example, in the case of $m=1$, if $c_1=c$ with $q_1=1$, we have
\begin{subequations}\label{PNP1}
\begin{align}
  \partial_t c & = \nabla \cdot(\nabla c+  c \nabla \psi)  \quad  x\in  \Omega,\; t>0, \\
- \Delta \psi & =  c  +\rho_0(x),  \quad x\in  \Omega, t>0, \\
c(0, x) & =c^{\rm in}(x),  \quad x\in \Omega,\\
   \frac{\partial \psi}{\partial  \textbf{n}} & =\sigma, \quad \frac{\partial c}{\partial  \textbf{n}}+c \frac{\partial \psi}{\partial  \textbf{n}} =0,\quad
  x\in \partial\Omega,\; t>0.
\end{align}
\end{subequations}
It may well be the case that when some species still evolve in time, the others are already at the steady states due to different time scales. For example, in the case of $m=2$, if $c_1=c$  and $c_2=S(x)$ with $q_1=1=-q_2$, we obtain
\begin{subequations}\label{PNP2}
\begin{align}
  \partial_t c & = \nabla \cdot(\nabla c +  c \nabla \psi)  \quad  x\in  \Omega, t>0,  \\
0 & = \nabla \cdot(\nabla S - S \nabla \psi)  \quad  x\in  \Omega, t>0,  \\
- \Delta \psi & =  c  +\rho_0(x) +S(x)  \quad x\in  \Omega, t>0, \\
c(0, x) & =c^{\rm in}(x),  \quad x\in \Omega,\\
   \frac{\partial \psi}{\partial  \textbf{n}} & =\sigma, \quad  \frac{\partial S}{\partial  \textbf{n}}- \sigma S=0,  \quad \frac{\partial c}{\partial  \textbf{n}}+ \sigma c  =0,
 \quad x\in \partial\Omega,\; t>0.
\end{align}
\end{subequations}
If all ion species are at the equilibrium states $c_i=\lambda_i e^{-q_i\psi}$, with $\lambda_i={\int_{\Omega} c^{\rm in}(x)dx}{\int_{\Omega} e^{-q_i\psi} dx}$
so that the total  density remains as given initially, the Poisson equation thus becomes a Poisson-Boltzmann equation(PBE) of the form
\begin{equation}\label{PBE}
-\Delta \psi=\sum_{i=1}^m \left(q_i\int_{\Omega}c^{\rm in}_i(x)dx\right)\frac{e^{-q_i\psi}}{\int_{\Omega} e^{-q_i\psi} dx} +\rho_0(x),  \quad x\in \Omega, \quad  \frac{\partial \psi}{\partial  \textbf{n}}\Big|_{\partial \Omega} =0.
\end{equation}
We should  point out that the numerical method presented in this paper may be used as an iterative algorithm to numerically compute the reduced system (\ref{PNP2}) and the nonlocal PBE (\ref{PBE}),  which plays essential roles in chemistry and biophysics.

In a larger context, the concentration equation also links to the general class of aggregation equations with diffusion
\begin{equation}\label{u}
\partial_t c +\nabla\cdot (c\nabla (G*c))=\Delta c,
\end{equation}
 which has been widely studied in applications such as biological swarms \cite{BCM07, BF08, TBL06} and  chemotaxis  \cite{BRB11, BCL09}.  For chemotaxis,  a wide literature exists in relation to the Keller-Segel model (see \cite{BRB11, BCL09} and references therein).  The left-hand-side in (\ref{u}) represents the active transport of the density $c$ associated to a non-local velocity field $u =\nabla (G*c)$. The potential $G$ is usually assumed to incorporate attractive interactions among individuals of the group, while repulsive (anti-crowding) interactions are accounted for by the  diffusion in the right-hand-side.  Of central role in studies of model (\ref{u}), and also particularly relevant to the present research, is the gradient flow formulation of the equation with respect to the free energy
\begin{equation}\label{eu}
F[c]=\int_{\Omega} c \log c dx -\frac{1}{2}\int_{\Omega}\int_{\Omega} G(x-y)c(x)c(y)dxdy.
\end{equation}

\subsection{Contents}  This paper is organized as follows: in Section 2, we present the DG method for the one dimensional case, and discuss how to deal with different types of boundary conditions; Section 3 is devoted to theoretical analysis for both semi-discrete and fully discrete schemes. We give the details of the numerical algorithm in Section 4, including how to compute the potential $\psi_h$, and the positivity preserving limiter.  Numerical results are presented in Section 5. Finally,  concluding remarks are given in Section 6.  Further numerical implementation details are given in the appendix.

\section{DG discretization in space}
\subsection{The DG scheme} In this section we present a  DG scheme for (\ref{PNP}). We consider a domain $\Omega = [a,  b]$ and a mesh, which
is not necessarily uniform i.e.,  a family of $N$ control cells  $I_j$ such that $I_j =(x_{j-1/2}, x_{j+1/2})$  with cell center
$x_j =(x_{j-1/2} +x_{j+1/2})/2$.
We set
$$
a=x_{1/2}<x_1<\cdots <x_{N-1/2}<x_N<x_{N+1/2}=b,
$$
and $\Delta x_j=x_{j+1/2}-x_{j-1/2}$.

Define the discontinuous finite element space
$$
V_h=\{v\in L^2(\Omega), \quad v|_{I_j}\in P^k(I_j), j=1, \cdots, N\},
$$
where $P^k$ denotes polynomials of degree at most $k$.   We rewrite the PNP system as follows
\begin{subequations}\label{cqp}
\begin{align}
 \partial_t c_i&=\partial_x (c_i \partial_x p_i),  \; i=1, \cdots, m, \\
 p_i &=q_i \psi +\log c_i,\\
 -\partial_{x}^2\psi&=\sum_{i=1}^m q_ic_i +\rho_0(x),
\end{align}
\end{subequations}
subject to initial data $c_i(0, x)=c_i^{\rm in}(x)$, which are necessarily  to meet the compatibility requirement:
$$
\int_{\Omega} \left(\sum_{i=1}^m q_i c_i^{\rm in}(x) +\rho_0(x)\right)dx=\int_{\partial \Omega}\sigma ds.
$$

The DG scheme is to find $c_{ih}, p_{ih}, \psi_h \in V_h$ such that for all $v_i, r_i, \eta \in V_h$, $i=1, \cdots, m$,
\begin{subequations}\label{dg}
\begin{align}
& \int_{I_j} \partial_t c_{ih} v_i dx =-\int_{I_j} c_{ih} \partial_x p_{ih} \partial_x v_i dx +\{c_{ih}\} \left( \widehat{\partial_x p_{ih}}v_i + (p_{ih}-\{p_{ih}\})
\partial_x v_{i}\right)\Big|_{\partial I_j},\\
 & \int_{I_j}p_{ih} r_i dx =\int_{I_j}(q_i \psi_h + \log c_{ih} )r_{i}dx,\\
 & \int_{I_j}  \partial_x \psi_{h}\partial_x \eta  dx - \left( \widehat { \partial_x\psi_{h}} \eta   +(\psi_h - \{\psi_h\})\partial_x \eta \right)\Big|_{\partial I_j}  = \int_{I_j}
  \left(\sum_{i=1}^m q_i c_{ih} +\rho_0\right)
 \eta dx,
\end{align}
\end{subequations}
where  $\widehat{\partial_x p_{ih}}=Fl(p_{ih})$ and $\widehat{ \partial_x\psi_{h}} =Fl(\psi_h)$, with the flux operator $Fl$ defined by
\begin{align}\label{flux}
Fl(w):=\beta_0 \frac{[w]}{h} +\{\partial_xw\}+\beta_1h[\partial_x^2 w].
\end{align}
Here we have used the notation: $[q]=q^+-q^-$, and $\{q\}={(q^++q^-)}/{2}$, where $q^+$ and $q^-$ are the values of $q$ from the right and left cell interfaces, respectively. The parameters $\beta_0$ and $\beta_1$ in $Fl(w)$ are chosen to enforce the free energy satisfying property. The details of how to choose $\beta_0$ and $\beta_1$ are given and justified
in Section \ref{secDGproperty}.

For the ODE system \eqref{dg}, we prepare initial data $c_{ih}^{{\rm in}}$ by the projection of $c_{i}^{\rm in}$ on the discontinuous finite element  space $V_h$ so that
\begin{equation}\label{proj}
\int_{I_j} c_{ih}^{\rm in}(x) v(x)dx=\int_{I_j} c_{i}^{\rm in}(x)v(x)dx, \quad \forall v\in P^k(I_j), \quad i=1, \cdots, m.
\end{equation}

\subsection{Boundary conditions} \label{BC}   The boundary conditions are a critical component of the PNP model and determine important qualitative behavior of the solution. {
Various boundary conditions may be used with the PNP equations. Here we consider the simplest form of boundary conditions for the concentration and for the electric potential field. For the concentration, one usually ignores any chemical reactions at the solid boundaries and assumes a no flux condition \cite{PS09, MG14, MXL16}, i.e.,
$$
 \frac{\partial c_i}{\partial  \textbf{n}}+q_ic_i \frac{\partial \psi}{\partial  \textbf{n}} =0,\quad
 \quad x\in \partial\Omega,\; t>0.
$$
However, it should be noted that it is possible to account for reactions through, for example, using the Butler--Volmer reaction kinetics \cite{BCB05}. The boundary conditions for the electrostatic potential are however not unique and greatly depend on the problem under investigation. Usually, one ends up either specifying the potential or the charge density on the boundary, which corresponds to Dirichlet or Neumann boundary conditions, respectively; or using }
Robin boundary conditions which model capacitors at the boundary.  Any combination of these boundary conditions can be applied to $\psi$. Therefore,  at the domain boundary the numerical flux needs to be treated with care so that the pre-specified boundary conditions are properly enforced.

To incorporate the boundary condition (\ref{PNP}d), we set  at  both $x_{1/2}$ and $x_{N+1/2}$,
 \begin{align} \label{bp}
&  Fl(p_{ih})=0, \; \{p_{ih}\}=p_{ih}, \; \{c_{ih}\}=c_{ih}\quad i=1, \cdots, m,  \\ \label{bpsi}
& Fl(\psi_{h})=\sigma, \; \{\psi_{h}\}=\psi_{h}.
 \end{align}
 For other types of boundary conditions,  we only need to modify the boundary fluxes accordingly.  For instance, for Dirichlet boundary conditions of the form
 $$ \psi(t,a)=\psi_l, \quad \psi(t,b)=\psi_r; \quad c_i(t,a)=c_{il},   \quad c_i(t,b)=c_{ir},
 $$
 we define  $\{c_{ih}\}$, $\{\psi_{ih}\}$, $Fl(\psi_{ih})$  and $Fl(p_{ih})$ at the boundary in the following way,
  \begin{align}
   at \quad &x_{1/2}  \label{Dirichleta} \\
  &\{c_{ih}\}= \frac{1}{2}(c_{ih}^++c_{il}),\quad\{\psi_h\}=\frac{1}{2}(\psi_l +\psi_h^+), \notag \\
  &Fl(\psi_{hx})=\beta_0(\psi_h^+-\psi_l)/h +\psi_{hx}^+ , \notag\\
  &{Fl(p_{ih})=-\beta_0(q_i\psi_l+\log c_{il} - p_{ih}^+)/h+p_{ihx}^+}, \notag\\
at \quad &x_{N+1/2} \label{Dirichletb}\\
& \{c_{ih}\}= \frac{1}{2}(c_{ir}+c_{ih}^-),\quad   \{\psi_h\}=\frac{1}{2}(\psi_r +\psi_h^-), \notag  \\
&Fl(\psi_{hx})=\beta_0(\psi_r - \psi_h^-)/h +\psi_{hx}^- ,\notag\\
  &{Fl(p_{ih})=\beta_0(q_i\psi_{r}+\log c_{ir} - p_{ih}^-)/h+  p_{ihx}^-,} \notag
\end{align}
where $\beta_0$ is the penalty parameter to be chosen.
{
\subsection{Comments on the generalization for multidimensional case} It is straightforward to extend DG formulation (\ref{dg}) to multi-dimensional spaces. Let $\Omega$ be a convex, bounded polygonal domain in $\mathbb{R}^d (d=2, 3)$.  We partition $\Omega$ into computational elements denoted by $\mathcal{T}_h = \{K\}$, and $h$  denotes the characteristic length of all the elements of $\mathcal{T}_h$.   As usual, we assume the mesh is conforming and the subdivision is regular.
We set the DG finite element space as
$$
V_h=\{v\in L^2(\Omega): \forall K\in \mathcal{T}_h, \; v|_K \in P^k(K)\},
$$
where $P^k(K)$ is the space of polynomial functions of degree at most $k$  on $K$.   With such an approximation space, the semi-discrete DG scheme  is to find $c_{ih}, p_{ih}, \psi_h \in V_h$ such that for all $v_i, r_i, \eta \in V_h$, $i=1, \cdots, m$,
\begin{subequations}\label{dg+}
\begin{align}
& \int_{K} \partial_t c_{ih} v_i dx =-\int_{K} c_{ih} \nabla_x  p_{ih}\cdot \nabla_x v_i dx +\int_{\partial K} \{c_{ih}\} \left( \widehat{\partial_n p_{ih}}v_i + (p_{ih}-\{p_{ih}\})
\partial_n v_{i}\right)ds,\\
 & \int_{K}p_{ih} r_i dx =\int_{K}(q_i \psi_h + \log c_{ih} )r_{i}dx,\\
 & \int_{K}  \nabla_x \psi_{h} \cdot \nabla_x \eta  dx - \int_{\partial K} \left( \widehat { \partial_n \psi_{h}} \eta   +(\psi_h - \{\psi_h\})\partial_n \eta \right)ds  = \int_{K}
  \left(\sum_{i=1}^m q_i c_{ih} +\rho_0\right)
 \eta dx,
\end{align}
\end{subequations}
where  $\widehat{\partial_n p_{ih}}=Fl_n(p_{ih})$ and $\widehat{ \partial_n \psi_{h}} =Fl_n(\psi_h)$, with the flux operator $Fl_n$ defined on the interface $e$ by
\begin{align}\label{flux}
Fl_n(w):=\beta_0 \frac{[w]}{h_e} +\{\partial_n w\}+\beta_1h_e[\partial_n^2 w].
\end{align}
Here the normal vector $n$ is assumed to be oriented from $K_1$ to $K_2$, sharing a common edge (face) $e$,
and $h_e$ is the characteristic length of $e$.  $\partial_n w$ and $\partial_n^2 w$ denote the first and second order derivative along direction $n$, respectively.  The average $\{w\}$ and the jump $[w]$ of $w$  on $e$  are as follows:
$$
\{w\}=\frac{1}{2} (w|_{K_1}+w|_{K_2}), \quad [w]= w|_{K_2} - w|_{K_1} \quad \forall e\in \partial K_1\cap \partial K_2.
$$
For $e$ in the set of boundary edges, each numerical solution has a uniquely defined restriction on $e$, while boundary conditions can be weakly enforced through the boundary fluxes in the same way as in the one-dimensional case (see Section 2.2). Most of the one-dimensional analysis presented in this work can be easily carried over to the multi-dimensional case, as long as both the bound $\Gamma(\beta_1, 1)$ as in (\ref{gb}), and  some mesh-dependent inverse inequalities such as those in Lemma 3.2 can be established.  However, for this coupled nonlinear system the implementation of the multi-dimensional algorithm is much more involved, and will be left in future work.
}

\section{Properties of the PNP system and the DG method } \label{secDGproperty}

\subsection{Properties of the continuous PNP system}
Under the zero flux condition for $c_i$ in (\ref{PNP}d) and the evolution law (\ref{PNP}a), the total mass for each $c_i (i=1, \cdots, m) $ is conserved in the sense that
\begin{align*}
\frac{d}{dt}\int_{\Omega}c_i(t, x)dx & =\int_{\Omega} \nabla_x \cdot( \nabla c_i +q_ic_i \nabla \psi)dx \\
& =\int_{\partial \Omega} \left(\frac{\partial c_i}{\partial  \textbf{n}}+q_i c_i \frac{\partial \psi}{\partial  \textbf{n}} \right) ds \\
&=0.
\end{align*}
Furthermore, assume the Dirichlet boundary condition is given and homogeneous, then the stability of the solution to the PNP system
is known \cite{BHN94, BD00} to be given by the energy law of the form
$$
\frac{d}{dt} \left\{  \int_{\Omega} \sum_{i=1}^m c_i({\rm log} c_i -1) +\frac{1}{2}|\nabla_x \psi|^2  dx \right\}
=-\int_{\Omega} \sum_{i=1}^m c_i|\nabla_x ({\rm log} c_i +q_i\psi)|^2dx,
$$
where the functional,
$$
\int_{\Omega} \sum_{i=1}^m c_i({\rm log} c_i -1) +\frac{1}{2}|\nabla_x \psi|^2dx
$$
is the energy and
$$
\int_{\Omega} \sum_{i=1}^m c_i|\nabla_x ({\rm log} c_i +q_i\psi)|^2dx \geq 0
$$
is the rate of dissipation.  For the Neumann boundary conditions, we show the corresponding energy law by a formal calculation.
First, the  free energy (\ref{free}) by the Poisson equation can be rewritten as
\begin{align*}
F & = F_1+F_2: =\sum_{i=1}^m  \int_{\Omega}  c_i {\rm log} c_i dx  +\frac{1}{2} \int_{\Omega}|\nabla_x \psi|^2 dx.
\end{align*}
We  formally calculate the change rate of each part of the free energy
\begin{align*}
\frac{d}{dt}F_1(t) & = \sum_{i=1}^m \int_{\Omega} (1+{\rm log} c_i)\partial_t c_idx \\
& =- \sum_{i=1}^m \int_{\Omega} \nabla_x ({\rm log} c_i)\cdot (\nabla_x c_i+q_ic_i \nabla_x \psi)dx\\
& = - \sum_{i=1}^m \int_{\Omega} \left( c_i^{-1}|\nabla_x c_i|^2  +q_i\nabla_x c_i \cdot \nabla_x \psi \right)dx
\end{align*}
and
\begin{align*}
\frac{d}{dt}F_2(t) & =  \int_{\Omega} \nabla_x \psi \cdot \partial_t (\nabla_x \psi) dx \\
& = \int_{\partial \Omega} \psi \partial_t (\partial_n\psi) ds +  \int_{\Omega}  \psi \partial_t ( -\Delta \psi) dx\\
&= \int_{\partial \Omega} \psi \partial_t \sigma ds +\sum_{i=1}^m \int_{\Omega} \psi  ( q_i \partial_t c_i)dx\\
&= \int_{\partial \Omega} \psi \partial_t \sigma ds -\sum_{i=1}^m \int_{\Omega} \nabla_x  (q_i \psi)  \cdot  (\nabla_x c_i +q_ic_i\nabla_x \psi) dx\\
& =\int_{\partial \Omega} \psi \partial_t \sigma ds - \sum_{i=1}^m \int_{\Omega} q_i \nabla_x  \psi  \cdot  \nabla_x c_i dx -
\sum_{i=1}^m \int_{\Omega} q_i^2 c_i |\nabla_x \psi|^2 dx.
\end{align*}
Putting together we obtain
$$
\frac{d}{dt}F=\int_{\partial \Omega} \psi \partial_t \sigma ds -\sum_{i=1}^m \int_{\Omega} c_i^{-1} |\nabla_x c_i +q_ic_i\nabla_x \psi|^2dx.
$$
For the case of $\partial_t \sigma=0$ we have
$$
\frac{d}{dt}F=  -\sum_{i=1}^m \int_{\Omega} c_i^{-1} |\nabla_x c_i +q_ic_i\nabla_x \psi|^2dx \leq 0.
$$
\begin{remark} In the special case that  $\sigma=0$ and $\rho_0=0$, both $F_1$ and $F_2$ are each decreasing in time, with
\begin{align*}
\frac{d}{dt}F_1 & =  - \int_{\Omega}  \sum_{i=1}^m c_i^{-1}|\nabla_x c_i|^2dx  -\int_{\Omega}|\Delta \psi|^2dx \leq 0, \\
\frac{d}{dt} F_2 & =- \int_{\Omega} \left( \sum_{i=1}^m q_i^2 c_i\right)  |\nabla_x \psi|^2 dx-\int_{\Omega}|\Delta \psi|^2dx \leq 0.
\end{align*}
This can be verified by a direct calculation as
\begin{align*}
 - \sum_{i=1}^m \int_{\Omega} q_i \nabla_x  \psi  \cdot  \nabla_x c_i dx  & =\int_{\Omega} (\Delta \psi) \sum_{i=1}^m (q_ic_i)dx -
 \int_{\partial \Omega} \sigma \sum_{i=1}^m (q_ic_i)ds \\
 & =-\int_{\Omega}|\Delta \psi|^2dx.
\end{align*}
\end{remark}

\subsection{Properties of the semi-discrete DG method}
For some $M(x)>0$, we define the weighted  bilinear operator
$$
 A_M(u, v)=  \sum_{j=1}^N \int_{I_j} M \partial_x u \partial_x v dx + \sum_{j=1}^{N-1} \left( \{M\} (\widehat {u_x}[v]  +\{v_x\}[u] )\right)_{j+1/2},
$$
and the associated energy norm
$$
\|v\|_{M, E}^2= \sum_{j=1}^N \int_{I_j}M v_x^2 dx +\sum_{j=1}^{N-1}\frac{\beta_0}{h} \left(\{M\}[v]^2 \right)_{j+1/2}.
$$
When $M=1$, we denote $\|v\|_{1, E}=\|v\|_E$. We also introduce the notation,
\begin{equation}\label{b1m}
\Gamma(\beta_1, M): =\sup_{u\in P^{(k-1)}([-1, 1])}\frac{2(u(1)-2\beta_1\partial_\xi u(1))^2}{\int_{-1}^1 w_j(\xi)u^2(\xi)d\xi}
\end{equation}
with
$$
w_j(\xi)=\min\left\{M(x_j+\frac{h}{2}\xi), M(x_{j+1}-\frac{h}{2}\xi) \right\}.
$$
We recall the following estimates.
\begin{lemma}\label{lem1}(cf. \cite{LY14a})
If $\beta_0>\Gamma(\beta_1, M)\{M\}$ at each interface $x_{j+1/2}$, $j=1, \cdots, N-1$, then
\begin{equation}\label{amv}
A_M(v, v) \geq \gamma \|v\|_{M, E}^2 +\frac{1}{2}\int_{I_1\cup I_N} M v_x^2dx
\end{equation}
for some $\gamma\in (0, 1)$. We also have
\begin{equation}\label{g1m}
\Gamma(\beta_1, M)  \geq \frac{2h([\partial_x w] +\beta_1h[\partial_x^2])^2}{\left(\int_{I_j}+\int_{I_{j+1}}\right)
M (\partial_x w)^2dx}.
\end{equation}
\end{lemma}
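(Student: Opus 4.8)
The plan is to prove \reff{amv} by a direct energy identity followed by absorption of the interface terms. Setting $u=v$ in $A_M$ and inserting the flux \reff{flux}, the penalty part $\beta_0[v]/h$ combines with $\{M\}[v]$ to reproduce the penalty piece of the energy norm, so that
$$A_M(v,v)=\|v\|_{M,E}^2+\sum_{j=1}^{N-1}\Big(\{M\}\,[v]\,\big(2\{v_x\}+\beta_1 h[v_{xx}]\big)\Big)_{j+1/2}.$$
Everything then hinges on estimating the interface cross terms $\{M\}[v](2\{v_x\}+\beta_1 h[v_{xx}])$. At each interior interface I would split the trace combination into its one-sided parts, $2\{v_x\}+\beta_1 h[v_{xx}]=A_-+A_+$ with $A_-=v_x^--\beta_1 h v_{xx}^-$ supplied by $I_j$ and $A_+=v_x^++\beta_1 h v_{xx}^+$ supplied by $I_{j+1}$.

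Next I would rescale to the reference interval. Mapping $I_j$ via $x=x_j+\tfrac h2\xi$ and $I_{j+1}$ via $x=x_{j+1}-\tfrac h2\xi$, both carrying the interface to $\xi=1$, the chain rule turns each one-sided part into the exact shape appearing in \reff{b1m}: with $u$ the reference representation of $v_x$ on the relevant cell one gets $A_\pm=u(1)-2\beta_1\partial_\xi u(1)$. The definition of $\Gamma(\beta_1,M)$ then gives $A_\pm^2\le\tfrac12\Gamma(\beta_1,M)\int_{-1}^1 w_j u^2\,d\xi$, and since $w_j(\xi)$ is, by the very $\min$ in its definition, dominated by the value of $M$ at the corresponding physical point of each neighbor, we may bound $\int_{-1}^1 w_j u^2\,d\xi$ by $\tfrac2h\int_{I_j}Mv_x^2\,dx$ (resp.\ over $I_{j+1}$), yielding $A_\pm^2\le\tfrac{\Gamma}{h}\int_{I_{j\text{ or }j+1}}Mv_x^2\,dx$. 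This is essentially the estimate \reff{g1m}, which I would establish first as a short, self-contained consequence of the variational formula \reff{b1m}: \reff{g1m} is nothing but \reff{b1m} evaluated at an admissible test polynomial assembled from the two cell representations of $v_x$ (reflecting one of them across the interface so that the combination $[\partial_x w]+\beta_1 h[\partial_x^2 w]$ is reproduced), with the denominator controlled by the two-cell integral through the symmetry of $w_j$ about $\xi=1$ and its $\min$-structure.

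With these one-sided bounds in hand, I would apply a weighted Young inequality at each interface to distribute the cross term between the local penalty $\tfrac{\beta_0}{h}\{M\}[v]^2$ and the Dirichlet energies of the two adjacent cells. The hypothesis $\beta_0>\Gamma(\beta_1,M)\{M\}$ is used precisely to keep the energy fraction charged to each cell strictly below the fraction available there. The summation over interfaces is the delicate step: every interior cell borders two interfaces and so is charged twice, whereas the end cells $I_1$ and $I_N$ border only one interior interface and are charged once. Tuning the Young parameter and taking $\gamma\in(0,1)$ to be the uniform surviving fraction leaves $\gamma\|v\|_{M,E}^2$ globally together with the never-charged remainder of the Dirichlet energy on the two end cells, which is the term $\tfrac12\int_{I_1\cup I_N}Mv_x^2\,dx$ in \reff{amv}.

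The principal obstacle is this sharp constant accounting. A crude bound such as $(A_-+A_+)^2\le2(A_-^2+A_+^2)$ followed by the double counting across interfaces would force $\beta_0>2\Gamma(\beta_1,M)\{M\}$ and would wipe out the clean end-cell term; the argument must instead keep the two one-sided traces separate, attach each to only a controlled share of its cell's energy, and exploit that the single constant $\Gamma(\beta_1,M)$ furnished by the $\min$-weight in \reff{b1m} is simultaneously valid on both sides of every interface. Making all of these constants close under the stated hypothesis $\beta_0>\Gamma(\beta_1,M)\{M\}$, rather than a larger multiple of $\Gamma\{M\}$, is the heart of the estimate.
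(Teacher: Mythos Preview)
The paper does not supply its own proof of this lemma; it is quoted from \cite{LY14a} (see the ``cf.'' tag), and only the remark about the boundary-cell term $\tfrac12\int_{I_1\cup I_N}Mv_x^2\,dx$ is added. So there is no paper-side argument to compare against. Your outline is the standard coercivity proof for DDG-type bilinear forms and is essentially correct: the energy identity, the one-sided decomposition $2\{v_x\}+\beta_1 h[v_{xx}]=A_-+A_+$, the rescaling that matches each $A_\pm$ to the numerator of \eqref{b1m}, the $\min$-structure of $w_j$ allowing the denominator to be replaced by the one-cell weighted Dirichlet integral, and the Young/absorption step are all the right moves. Your observation that the end cells are charged only once is precisely what produces the extra $\tfrac12\int_{I_1\cup I_N}Mv_x^2\,dx$.

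Two small remarks. First, in the rescaling you write $A_\pm=u(1)-2\beta_1\partial_\xi u(1)$; the chain rule actually gives $A_\pm=\pm\tfrac{2}{h}\bigl(u(1)-2\beta_1\partial_\xi u(1)\bigr)$, and this $2/h$ is what makes the final one-sided bound $A_\pm^2\le\tfrac{\Gamma}{h}\int Mv_x^2\,dx$ come out with the right power of $h$. Second, your closing worry that the crude estimate $(A_-+A_+)^2\le 2(A_-^2+A_+^2)$ would force $\beta_0>2\Gamma\{M\}$ is overstated: if you carry the Young parameter through and optimize, the ``crude'' and ``separate'' routes give the \emph{same} constraint, namely $\theta:=\max_j\Gamma_j\{M\}_{j+1/2}/\beta_0<1$ with $\gamma$ chosen so that $(1-\gamma)(1-2\gamma)\ge\theta$ (the $1-2\gamma$ coming from the end-cell requirement). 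So the hypothesis $\beta_0>\Gamma\{M\}$ really is enough, and your sketch needs no further sharpening on that point.
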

\begin{remark} The additional terms $\frac{1}{2}\int_{I_1\cup I_N} M v_x^2dx$ were not used in \cite{LY14a}; here we need them to control boundary contributions.
\end{remark}
In the case of $M=1$, the bound $\Gamma(\beta_0, 1)$ is obtained in \cite{Liu14}:
\begin{equation}\label{gb}
\Gamma(\beta_1, 1)= k^2 \left(1-\beta_1 (k^2-1)+\frac{\beta_1^2}{3}(k^2-1)^2 \right),
\end{equation}
which achieves its minimum $k^2/4$ at $\beta_1=\frac{3}{2(k^2-1)}$.

We first examine the existence and uniqueness of $\psi_h$ from solving (\ref{dg}c).  Note that for Neumann boundary data, only $\sigma_a$ and $\sigma_b$ are pre-specified so that
$$
\sigma_a-\sigma_b=\int_{a}^b \rho(t, x)dx, \quad  \rho(t, x)=\rho_0(x)+\sum_{i=1}^m q_ic_{i}(t, x),
$$
yet $\psi(t, a)$ and $\psi(t, b)$ are not given, the exact solution is unique up to an additive constant.

\begin{theorem} Assume that $\beta_0>\Gamma(\beta_1, 1)$, then for each given  $\rho$,  there exists a unique  $\psi_h$ to (\ref{dg}c) with (\ref{flux}) and (\ref{bpsi}), up to an additive constant.
\end{theorem}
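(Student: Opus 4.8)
The plan is to recognize the discrete Poisson problem (\ref{dg}c) as a single linear system on the finite-dimensional space $V_h$ governed by the bilinear form $A_M$ with $M=1$, and then to combine the coercivity estimate (\ref{amv}) with the finite-dimensional Fredholm alternative. First I would sum (\ref{dg}c) over all cells $I_j$. Using the single-valuedness of the numerical flux $\widehat{\partial_x\psi_h}=Fl(\psi_h)$ at each interior interface and the elementary identities $\psi_h^\pm-\{\psi_h\}=\pm\tfrac12[\psi_h]$, the interior interface contributions collapse exactly into the terms $Fl(\psi_h)[\eta]+\{\eta_x\}[\psi_h]$ that define $A_1(\psi_h,\eta)$. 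At the two domain endpoints the Neumann prescription (\ref{bpsi}), namely $Fl(\psi_h)=\sigma$ and $\{\psi_h\}=\psi_h$, kills the term $(\psi_h-\{\psi_h\})\partial_x\eta$ and replaces $\widehat{\partial_x\psi_h}$ by the known data $\sigma$. Thus (\ref{dg}c) is equivalent to seeking $\psi_h\in V_h$ with $A_1(\psi_h,\eta)=L(\eta)$ for every $\eta\in V_h$, where $L(\eta)$ gathers the volume term $\int_\Omega(\sum_i q_ic_{ih}+\rho_0)\eta\,dx$ together with the endpoint contributions carrying the prescribed flux $\sigma$ at $x=a$ and $x=b$. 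This is a square linear system, so existence and uniqueness are controlled entirely by the kernel of $A_1$.

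For uniqueness up to a constant I would treat the homogeneous problem ($L\equiv0$) and test with $\eta=\psi_h$, giving $A_1(\psi_h,\psi_h)=0$. Since the hypothesis $\beta_0>\Gamma(\beta_1,1)$ is precisely the condition $\beta_0>\Gamma(\beta_1,M)\{M\}$ of Lemma \ref{lem1} in the case $M=1$ (where $\{M\}=1$), the estimate (\ref{amv}) yields $0=A_1(\psi_h,\psi_h)\ge\gamma\|\psi_h\|_E^2+\tfrac12\int_{I_1\cup I_N}\psi_{hx}^2\,dx\ge0$. Hence $\|\psi_h\|_E=0$, which forces $\psi_{hx}\equiv0$ on every cell and $[\psi_h]=0$ at every interior interface; therefore $\psi_h$ is a single global constant. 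Conversely every constant clearly satisfies the homogeneous equation, so the kernel of $A_1$ is exactly the one-dimensional space of constants, which gives uniqueness modulo an additive constant.

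For existence I would use that, on the finite-dimensional space $V_h$, the left and right kernels of the (generally non-symmetric) form $A_1$ have equal dimension. A constant test function $\eta$ has $\eta_x\equiv0$ and zero jumps, so $A_1(u,\eta)=0$ for all $u$; hence the constants also constitute the one-dimensional left kernel. By the Fredholm alternative the system is solvable if and only if $L$ annihilates this left kernel, i.e. if and only if $L(1)=0$. Taking $\eta\equiv1$ reduces $L(1)=0$ to the discrete compatibility identity $\int_\Omega(\sum_i q_ic_{ih}+\rho_0)\,dx=\sigma_a-\sigma_b$ recorded just above the theorem, which holds by assumption; so a solution exists and is unique up to the additive constant. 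The main obstacle I anticipate is the bookkeeping in the first step: one must verify with care that the per-cell flux terms reassemble into the coercive form $A_1$ and that (\ref{bpsi}) cleanly relegates all endpoint terms to the data $L$. A secondary point worth flagging is that the term $\beta_1 h[\partial_x^2\cdot]$ in $Fl$ renders $A_1$ non-symmetric, so the existence half cannot lean on symmetry and must instead go through the equality of the left and right kernel dimensions.
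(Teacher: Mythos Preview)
Your argument is correct and, for the uniqueness half, identical to the paper's: both take the difference of two solutions, test it against itself to obtain $A_1(\eta,\eta)=0$, and invoke the coercivity estimate (\ref{amv}) from Lemma~\ref{lem1} with $M=1$ to force $\eta$ to be a global constant.

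Where you differ is in the existence half. The paper dismisses it in one line (``existence is equivalent to uniqueness for this finite dimensional problem''), which is literally the rank--nullity principle for a square system but glosses over the fact that the kernel here is one-dimensional rather than trivial, so a compatibility condition on the data is genuinely needed. You make this explicit via the Fredholm alternative: you identify the left kernel of $A_1$ as the constants (correct, since $\eta\equiv1$ has $\eta_x=0$ and $[\eta]=0$), and then reduce solvability to $L(1)=0$, which is exactly the discrete compatibility identity. Your remark that the $\beta_1 h[\partial_x^2\cdot]$ term makes $A_1$ non-symmetric, so that one cannot simply appeal to symmetry and must instead match the left and right kernels, is a subtlety the paper's one-line treatment does not surface. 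Your version is the more complete of the two; the paper's is the same idea with the existence bookkeeping suppressed.
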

\begin{proof}
It suffices to prove the uniqueness since existence is equivalent to uniqueness for this finite dimensional problem.  Let $\eta =\psi_{h1}-\psi_{h2}$ for two
different solutions $\psi_{hi}$, then
$$
A_1(\eta, \eta)=0.
$$
On the other hand, Lemma 3.1 ensures that there exists $\gamma \in (0, 1)$ such that
$$
A_1(\eta, \eta) \geq  \gamma\|\eta\|_E^2= \gamma \left(\sum_{j=1}^N \int_{I_j}\eta_x^2dx +\sum_{j=1}^{N-1}\frac{\beta_0}{h}[\eta]_{j+1/2}^2 \right).
$$
Hence $\eta$ must be a constant.
\end{proof}

We next prove the conservation of $c_{ih}$ and the semi-discrete free energy dissipation law.
\begin{theorem}\label{thm3.3}
\begin{itemize}
\item[1.]  The semi-discrete scheme is  conservative in the sense that  each  total concentration $c_{ih} (i=1, \cdots,  m)$ remains unchanged in time,
\begin{align}
\label{conserve1}
\frac{d}{dt} \sum_{j=1}^N \int_{I_j} c_{ih}(t, x) dx  =0, \quad t>0.
\end{align}
\item[2.] Suppose that $\partial_t \sigma=0$ and $c_{ih}(t,x)>0 $ in each $I_j$, the semi-discrete free energy
$$
F =\sum_{j=1}^N  \int_{I_j}  \left[ \sum_{i=1}^m c_{ih} {\rm log} c_{ih}+\frac{1}{2} \left( \sum_{i=1}^m q_i c_{ih}+\rho_0 \right) \psi_h \right] dx +\frac{1}{2}\int_{\partial \Omega}\sigma  \psi_h ds
$$
satisfies
 \begin{align}
\frac{d}{dt} F&= - \sum_{i=1}^m A_{c_{ih}}(p_{ih}, p_{ih}).
\label{fd}
\end{align}
Moreover,
$$
\frac{d}{dt} F\leq 0,
$$
provided $\beta_0$ is suitably large, and $\beta_1=0$ in $Fl(\psi_h)$  defined in (\ref{flux}) and (\ref{bpsi}).
\end{itemize}
\end{theorem}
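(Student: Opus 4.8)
The plan is to treat the two assertions separately, establishing the conservation identity first and then exploiting it inside the free energy computation. For (\ref{conserve1}), I would test the concentration equation (\ref{dg}a) with the constant $v_i \equiv 1$ on each cell $I_j$. Since $\partial_x v_i = 0$, both the volume term and the $(p_{ih}-\{p_{ih}\})\partial_x v_i$ contribution drop, leaving $\int_{I_j}\partial_t c_{ih}\,dx = \{c_{ih}\}\widehat{\partial_x p_{ih}}\big|_{\partial I_j}$. Summing over $j$, the numerical flux $\{c_{ih}\}\widehat{\partial_x p_{ih}}$ is single-valued at every interior interface and enters with opposite signs from the two neighbouring cells, so those terms telescope to zero; at $x_{1/2}$ and $x_{N+1/2}$ the boundary prescription (\ref{bp}) forces $\widehat{\partial_x p_{ih}} = Fl(p_{ih}) = 0$. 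Hence $\frac{d}{dt}\sum_j\int_{I_j}c_{ih}\,dx = 0$.

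For the free energy law (\ref{fd}) I would split $F = F_1 + F_2$ into the entropic part $F_1 = \sum_{i,j}\int_{I_j}c_{ih}\log c_{ih}\,dx$ and the electrostatic part $F_2 = \frac12\sum_j\int_{I_j}\rho_h\psi_h\,dx + \frac12\int_{\partial\Omega}\sigma\psi_h\,ds$, with $\rho_h = \sum_i q_i c_{ih} + \rho_0$. Differentiating $F_1$ gives $\sum_{i,j}\int_{I_j}(1+\log c_{ih})\partial_t c_{ih}$, and the constant $1$ integrates to zero by the conservation just proved. The crucial observation is that $\partial_t c_{ih}\in V_h$, so I may insert it as the test function $r_i$ in the projection relation (\ref{dg}b) to replace $\log c_{ih}$ by $p_{ih} - q_i\psi_h$ under the integral. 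This produces $\frac{d}{dt}F_1 = \sum_{i,j}\int_{I_j}p_{ih}\,\partial_t c_{ih} - \int_\Omega\psi_h\,\partial_t\rho_h\,dx$.

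The heart of the argument is the treatment of $F_2$ and the cancellation of the electrostatic cross term. Summing the Poisson relation (\ref{dg}c) over cells and applying (\ref{bpsi}) assembles it into $A_1(\psi_h,\eta) = \int_\Omega\rho_h\eta\,dx + (\text{boundary})$. I would test this with $\eta = \partial_t\psi_h$ so that $\int_\Omega\rho_h\partial_t\psi_h$ combines with $\frac12\int_{\partial\Omega}\sigma\partial_t\psi_h$ (using $\partial_t\sigma = 0$), and separately differentiate (\ref{dg}c) in time — where $Fl(\partial_t\psi_h)=\partial_t\sigma = 0$ annihilates the boundary flux — and test with $\eta = \psi_h$ to obtain $\int_\Omega(\partial_t\rho_h)\psi_h = A_1(\partial_t\psi_h,\psi_h)$. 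The key subtlety, and the reason the hypothesis $\beta_1 = 0$ in $Fl(\psi_h)$ is imposed, is that $A_1$ is symmetric \emph{only} when $\beta_1 = 0$: the difference $A_1(u,v) - A_1(v,u)$ reduces to $\beta_1 h\sum([u_{xx}][v] - [v_{xx}][u])$, which vanishes precisely when $\beta_1 = 0$. With symmetry in hand, $\int_\Omega(\partial_t\rho_h)\psi_h = A_1(\psi_h,\partial_t\psi_h) = \frac{d}{dt}F_2$, so the two $A_1$ contributions cancel and $\frac{d}{dt}F = \sum_{i,j}\int_{I_j}p_{ih}\,\partial_t c_{ih}\,dx$.

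Finally I would close the identity by testing the concentration equation (\ref{dg}a) with $v_i = p_{ih}$: summing over $j$ and collecting interface contributions reproduces exactly the weighted form $A_{c_{ih}}(p_{ih},p_{ih})$ with weight $M = c_{ih}$, while (\ref{bp}) eliminates all domain-boundary terms, giving $\sum_j\int_{I_j}p_{ih}\partial_t c_{ih} = -A_{c_{ih}}(p_{ih},p_{ih})$ and hence (\ref{fd}). For the sign, positivity $c_{ih}>0$ makes $M = c_{ih}$ an admissible weight in Lemma \ref{lem1}; choosing $\beta_0 > \Gamma(\beta_1, c_{ih})\{c_{ih}\}$ at each interface (the precise sense of $\beta_0$ \emph{suitably large}) yields $A_{c_{ih}}(p_{ih},p_{ih}) \geq \gamma\|p_{ih}\|_{c_{ih},E}^2 + \frac12\int_{I_1\cup I_N}c_{ih}(\partial_x p_{ih})^2\,dx \geq 0$ via (\ref{amv}), so $\frac{d}{dt}F \leq 0$. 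I expect the main obstacle to be the bookkeeping in the symmetry/cancellation step — correctly assembling interface and domain-boundary flux terms into $A_1$, verifying that the boundary contributions from the Poisson weak form match the $\frac12\int_{\partial\Omega}\sigma\psi_h$ term in $F_2$, and isolating the single place where $\beta_1 = 0$ becomes indispensable.
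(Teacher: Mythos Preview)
Your proposal is correct and follows essentially the same approach as the paper: the same test functions ($v_i=1$ for conservation; $r_i=\partial_t c_{ih}$ in (\ref{dg}b); $v_i=p_{ih}$ in (\ref{dg}a); $\eta=\partial_t\psi_h$ and $\eta=\psi_h$ in the summed Poisson relation), the same symmetry observation $A_1(\psi_h,\partial_t\psi_h)-A_1(\partial_t\psi_h,\psi_h)=0$ when $\beta_1=0$ in $Fl(\psi_h)$, and the same appeal to Lemma~\ref{lem1} with $\beta_0>\Gamma(\beta_1,c_{ih})\{c_{ih}\}$ for coercivity. The only difference is organizational: the paper differentiates $F$ as a whole and isolates the antisymmetric combination $\tfrac12\sum_j\int_{I_j}(\rho_h\partial_t\psi_h-\partial_t\rho_h\psi_h)\,dx+\tfrac12\int_{\partial\Omega}\sigma\partial_t\psi_h\,ds$ directly (showing it equals $A_1(\psi_h,\partial_t\psi_h)-A_1(\partial_t\psi_h,\psi_h)=0$), rather than splitting $F=F_1+F_2$ and cancelling $\int_\Omega\psi_h\partial_t\rho_h$ between the two pieces as you do.
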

\begin{proof}
1.  The conservation \eqref{conserve1} follows from taking  $v_i=1$ in (\ref{dg}a) with summation over $j$.

2. Set
$$
\rho_h(t, x)=\sum_{i=1}^{m} q_i  c_{ih}(t, x) +\rho_0(x).
$$
A direct calculation using $\sum_{j=1}^N \int_{I_j} \partial_tc_{ih}dx =0$ and the assumption of $\sigma_t=0$ gives
\begin{align*}
\frac{d}{dt}F&=  \sum_{j=1}^N \int_{I_j} \sum_{i=1}^{m}(1+\ln c_{ih} +q_i\psi_h  )\partial_tc_{ih} dx    +\frac{1}{2}
 \sum_{j=1}^N \int_{I_j}  [\rho_h \partial_t \psi_h - \partial_t \rho_{h} \psi_h ]dx + \frac{1}{2}\int_{\partial \Omega}\sigma \partial_t \psi_h ds\\
    &  =  \sum_{j=1}^N \int_{I_j} \sum_{i=1}^{m} (\ln c_{ih}+q_i\psi_h)\partial_t{c_{ih}} dx  +\frac{1}{2}
 \sum_{j=1}^N \int_{I_j}  [\rho_h \partial_t \psi_h - \partial_t \rho_{h} \psi_h ]dx + \frac{1}{2}\int_{\partial \Omega}\sigma \partial_t \psi_h ds.
\end{align*}
From (\ref{dg}b) with $r_i=\partial_t c_{ih}$ it follows that
$$
\int_{I_j} (\ln c_{ih}+q_i\psi_h)\partial_t {c_{ih}} dx =\int_{I_j} p_{ih}\partial_t c_{ih}dx.
$$
 Summing (\ref{dg}a) in $i, j$ with $v_i=p_{ih}$ gives
\begin{align*}
&\sum_{j=1}^N \int_{I_j} \sum_{i=1}^m \partial_t c_{ih} p_{ih} dx = - \sum_{j=1}^N \int_{I_j}  \sum_{i=1}^m c_{ih} |p_{ihx}|^2 dx - \sum_{j=1}^{N-1} \sum_{i=1}^m ( \{c_{ih}\} [p_{ih}] (\widehat{p_{ihx}} +\{p_{ihx}\} ))_{j+1/2},
\end{align*}
since $\widehat{p}_{hx, \frac{1}{2}}=\widehat{p}_{hx,N+\frac{1}{2}}=0$.   Next we sum  (\ref{dg}c) over all cell index $j$ to obtain
\begin{align}\label{pp}
A_1(\psi_h, \eta) =\sum_{j=1}^N\int_{I_j} \rho_h \eta dx+ \int_{\partial \Omega} \sigma \eta ds.
\end{align}
In one dimensional case,
$$
\int_{\partial \Omega} \sigma \eta ds =\sigma_b \eta_{N+1/2}^- - \sigma_a \eta_{1/2}^+.
$$
Taking derivative of (\ref{pp}) with respect to $t$ we also have
\begin{align}\label{pptsd}
A_1(\partial_t \psi_h, \eta) =\sum_{j=1}^N\int_{I_j} \partial_t \rho_h \eta dx.
\end{align}
Thus (\ref{pp}) with $\eta=\partial_t\psi_h$ subtracted  by (\ref{pptsd}) with $\eta=\psi_h$ gives
\begin{align*}
  \sum_{j=1}^N \int_{I_j}  [\rho_h \partial_t \psi_h - \partial_t \rho_{h} \psi_h ]dx + \int_{\partial \Omega}\sigma \partial_t \psi_h ds
 & =A_1(\psi_h, \partial_t \psi_h )- A_1(\partial_t \psi_h, \psi_h) \\
 & =\beta_1 h \sum_{j=1}^{N-1} ([\partial_x^2 \psi_h][ \partial_t \psi_h] -[\partial_x^2\partial_t \psi_h][\psi_h])_{j+1/2}\\
 &=0
\end{align*}
since $\beta_1=0$ in the numerical flux $Fl(\psi_h)$ defined in (\ref{flux}).

Putting all together we obtain
\begin{align*}
\frac{d}{dt}{F} & =- \sum_{j=1}^N \int_{I_j}  \sum_{i=1}^m c_{ih} |p_{ihx}|^2 dx - \sum_{j=1}^{N-1} \sum_{i=1}^m ( \{c_{ih}\} [p_{ih}] (\widehat {p_{ihx}} +\{p_{ihx}\} ))_{j+1/2} \\
& =- \sum_{i=1}^m A_{c_{ih}}(p_{ih}, p_{ih}) \\
& \leq 0,
\end{align*}
if at each interface $\beta_0> \max_i \Gamma(\beta_1, c_{ih})\{c_{ih}\}$, as shown in Lemma \ref{lem1}.

\begin{remark} For $(\beta_0, \beta_1)$ in the numerical flux formulation (\ref{flux}),  we use $\beta_0>k^2$ and $\beta_1=0$ for $Fl(\psi_{h})$.  For $Fl(p_{h})$ we need to pick
$$
\beta_0>\Gamma(\beta_1,c_{ih})\{c_{ih}\}
$$
for numerical solutions  $c_{ih}$.  For sufficiently small $h$, variation of $c_{ih}$ near each interface is relatively bounded by a factor  $2$,  hence it suffices to  choose
$$
\beta_0> 2 \Gamma(\beta_1, 1) =2 k^2 \left(1-\beta_1 (k^2-1)+\frac{\beta_1^2}{3}(k^2-1)^2 \right),
$$
where (\ref{gb}) has been used.
For $k>1$, one choice  is  $\beta_1=\frac{3}{2(k^2-1)}$ so that $\beta_0> 2\Gamma(\beta_1, 1)=k^2/2$.  Therefore it suffices to take  $\beta_0 >k^2$ in (\ref{flux}) for both $Fl(\psi_{h})$ and $Fl(p_{h})$.  The choices  of $(\beta_0, \beta_1)$ in our numerical simulation all fall within the range $\beta_0>k^2$.     
\end{remark}

\end{proof}
\subsection{Properties of the fully discrete DG method } In order to preserve the {free energy} dissipation law at
each time step, the time step restriction is needed when using an explicit time discretization. We
now discuss this issue by taking the Euler first order time discretization of \eqref{dg} with  uniform time step $\Delta t$:
find $c_{ih}^{n+1}, \psi^{n+1}_h \in V_h$ such that for any $v_i, r_i, \eta \in V_h$,
\begin{subequations}\label{dgEuler}
\begin{align}
& \int_{I_j} D_t c_{ih}^n v_i dx =-\int_{I_j} c_{ih}^n \partial_x p_{ih}^n \partial_x v_i dx +\{c_{ih}^n\} \left( \widehat{\partial_x p_{ih}^n}v_i + (p_{ih}^n-\{p_{ih}^n\})
\partial_x v_{i}\right)\Big|_{\partial I_j},\\
 & \int_{I_j}p_{ih}^n r_i dx =\int_{I_j}(q_i \psi_h^n + \log c_{ih}^n )r_{i}dx,\\
 & \int_{I_j}  \partial_x \psi_{h}^n \partial_x \eta  dx - \left( \widehat { \partial_x \psi_{h}^n} \eta   +(\psi_h^n - \{\psi_h^n\})\partial_x \eta \right)\Big|_{\partial I_j}  = \int_{I_j}
  \left[\sum_{i=1}^m q_i c_{ih}^n +\rho_0\right]
 \eta dx.
\end{align}
\end{subequations}
Here and in what follows, we use the notation for any function $w^n(x)$ as
$$
D_t w^n=\frac{w^{n+1}-w^n}{\Delta t}.
$$
We recall some basic estimates:
\begin{lemma} (cf. \cite[Lemma 3.2]{LW15}) The following  inequalities hold for any $v\in V_h$:
\begin{subequations}\label{in}
\begin{align}
& \sum_{j=1}^N \int_{I_j} v_x^2 dx \leq \frac{k(k+1)^2(k+2)}{h^2} \sum_{j=1}^N  \int_{I_j}v^2 dx, \\
& \sum_{j=1}^{N-1} [v]^2_{j+1/2}  \leq \frac{4(k+1)^2}{h} \sum_{j=1}^N  \int_{I_j}v^2 dx, \\
& \sum_{j=1}^{N-1} \{v_x\}^2_{j+1/2} \leq \frac{k^3(k+1)^2(k+2)}{h^2} \sum_{j=1}^N  \int_{I_j}v^2 dx.
\end{align}
\end{subequations}
\end{lemma}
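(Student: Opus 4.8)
The plan is to reduce all three estimates to the reference interval $[-1,1]$ via the affine map $x = x_j + \tfrac{h}{2}\xi$ and to extract them from two elementary $h$-free polynomial inequalities, so that the powers of $h$ in \eqref{in} are produced automatically by the Jacobian. Writing $\hat v(\xi) = v(x_j + \tfrac h2\xi)$, one has $\int_{I_j} v^2\,dx = \tfrac h2\int_{-1}^1 \hat v^2\,d\xi$ and $\int_{I_j} v_x^2\,dx = \tfrac 2h\int_{-1}^1 \hat v_\xi^2\,d\xi$, and likewise point values at $x_{j\pm1/2}$ become point values at $\xi=\pm1$. Thus the only genuine content lives on the fixed interval $[-1,1]$.

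First I would establish the point-value trace bound. For $\hat p\in P^\ell([-1,1])$ expand $\hat p = \sum_{n=0}^\ell a_n L_n$ in Legendre polynomials, so that $\int_{-1}^1 \hat p^2\,d\xi = \sum_{n=0}^\ell \tfrac{2}{2n+1} a_n^2$ and $\hat p(\pm1) = \sum_{n=0}^\ell (\pm1)^n a_n$. Cauchy--Schwarz against the weights $\tfrac{2}{2n+1}$, together with $\sum_{n=0}^\ell \tfrac{2n+1}{2} = \tfrac{(\ell+1)^2}{2}$, gives
\begin{equation*}
|\hat p(\pm1)|^2 \le \frac{(\ell+1)^2}{2}\int_{-1}^1 \hat p^2\,d\xi ,
\end{equation*}
which rescales to $|p(x_{j\pm1/2})|^2 \le \tfrac{(\ell+1)^2}{h}\int_{I_j} p^2\,dx$ for $p\in P^\ell(I_j)$. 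The second building block is the derivative inverse bound: using the identity $L_n' = \sum_{0\le m<n,\ n-m\ \mathrm{odd}} (2m+1)L_m$ to expand $\hat v_\xi = \sum_m b_m L_m$ with $b_m = (2m+1)\sum_{n>m,\ n-m\ \mathrm{odd}} a_n$, one bounds the quadratic form $\int_{-1}^1 \hat v_\xi^2\,d\xi = \sum_m 2(2m+1)\big(\sum_{n>m,\ n-m\ \mathrm{odd}} a_n\big)^2$ in terms of $\sum_n \tfrac{2}{2n+1}a_n^2$ to obtain
\begin{equation*}
\int_{-1}^1 \hat v_\xi^2\,d\xi \le \frac{k(k+1)^2(k+2)}{4}\int_{-1}^1 \hat v^2\,d\xi , \qquad \hat v\in P^k .
\end{equation*}

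With these two facts in hand the three parts are bookkeeping. Part (a) is precisely the rescaled derivative bound summed over $j$. For (b) I would write $[v]^2_{j+1/2} \le 2\big((v^+_{j+1/2})^2 + (v^-_{j+1/2})^2\big)$, apply the trace bound with $\ell=k$ to each endpoint value, and sum; since the right endpoints of $I_1,\dots,I_{N-1}$ and the left endpoints of $I_2,\dots,I_N$ together contribute at most $2\sum_j\int_{I_j}v^2$, the prefactor $2\cdot(k+1)^2\cdot 2 = 4(k+1)^2$ emerges. For (c) I would use $\{v_x\}^2 \le \tfrac12\big((v_x^+)^2+(v_x^-)^2\big)$ and apply the trace bound to $p = v_x\in P^{k-1}$ (i.e. $\ell=k-1$, contributing the factor $k^2/h$), then control $\sum_j\int_{I_j}v_x^2$ by part (a) (contributing $k(k+1)^2(k+2)/h^2$); composing these two successive rescalings gives the stated constant $k^3(k+1)^2(k+2)$.

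The hard part will be the explicit constant in the second building block: producing the clean closed form $\tfrac{k(k+1)^2(k+2)}{4}$ from the Legendre derivative identity requires carefully estimating the off-diagonal quadratic form for $\int_{-1}^1 \hat v_\xi^2$, rather than invoking the sharp eigenvalue (which has no simple closed form). Everything else---the Cauchy--Schwarz trace estimate and the interface-counting that turns per-cell bounds into the global sums---is routine.
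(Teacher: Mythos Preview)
The paper does not actually prove this lemma; it merely quotes it from \cite{LW15}. Your plan---scale each cell to the reference interval, extract the endpoint trace constant $(\ell+1)^2$ from a Legendre expansion via Cauchy--Schwarz, supply the $L^2$ Markov-type bound $\int_{-1}^1\hat v_\xi^{\,2}\le\tfrac14\,k(k+1)^2(k+2)\int_{-1}^1\hat v^{2}$, and then assemble (a), (b), (c) by the interface counting you describe---is correct and is the standard argument behind such inverse inequalities. You are also right that the only non-mechanical step is pinning down the explicit constant in the derivative bound; the trace estimate and the summations are routine.

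One bookkeeping remark on part (c): composing the trace bound on $v_x\in P^{k-1}$ (factor $k^2/h$) with part (a) (factor $k(k+1)^2(k+2)/h^2$) produces $k^3(k+1)^2(k+2)\,h^{-3}\sum_j\int_{I_j}v^2\,dx$, not $h^{-2}$ as printed in the statement. The power $h^{-3}$ is what dimensional analysis requires and is exactly what the paper uses downstream in the fully discrete energy estimate, where the term $\epsilon_3 h^3\sum_{j}\{\partial_x v_i\}^2$ is absorbed into $\sum_j\int_{I_j} v_i^2\,dx$ with no residual power of $h$. So your chain of estimates is right; the printed exponent in (c) is a typo.
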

Based on these estimates we show several properties of the fully discretized scheme.
\begin{theorem}\label{thmEuler}
\begin{itemize}
\item[1.]  The fully discrete scheme (\ref{dgEuler}) is  conservative in the sense that  each  total concentration $c_{ih} ^n(x)(i=1, \cdots,  m)$ remains unchanged in time,
\begin{align}
\label{conservediscrete}
 \sum_{j=1}^N \int_{I_j} c_{ih}^n  dx =   \sum_{j=1}^N \int_{I_j} c_{ih} ^{n+1} dx, \quad i=1,\cdots, m, \quad \quad t>0.
\end{align}

\item[2.] Assume that $c_{ih}^n(x)>0 $ in each $I_j$. There exists $\mu^*>0$ such that if the mesh ratio $\mu=\frac{\Delta t}{\Delta x^2} \in (0, \mu^*)$, then the fully discrete free energy
$$
F^n =\sum_{j=1}^N  \int_{I_j}  \left[ \sum_{i=1}^m c_{ih} ^n{\rm log} c_{ih}^n+\frac{1}{2} \left( \sum_{i=1}^m q_i c_{ih}^n+\rho_0 \right) \psi_h^n \right] dx +\frac{1}{2}\int_{\partial \Omega}\sigma  \psi_h^n ds
$$
satisfies
 \begin{align}
D_tF^n&\leq  - \frac{1}{2} \sum_{i=1}^m A_{c_{ih}^n}(p_{ih}^n, p_{ih}^n).
\label{fdEuler}
\end{align}
Moreover,
 \begin{align}\label{ff}
F^{n+1}\leq F^n,
\end{align}
provided that $\beta_0$ is suitably large,  and $\beta_1=0$ in $Fl(\psi_h)$  defined in (\ref{flux}) and (\ref{bpsi}).
\end{itemize}
\end{theorem}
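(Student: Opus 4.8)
The plan is to mirror the semi-discrete argument of Theorem \ref{thm3.3}, viewing the Euler step as a perturbation whose quadratic-in-time error is absorbed by the dissipation once $\mu=\Delta t/\Delta x^{2}$ is small. Part 1 is immediate: taking $v_i=1$ in (\ref{dgEuler}a), the interior interface fluxes telescope and the boundary fluxes vanish, so $\sum_j\int_{I_j}D_t c_{ih}^n\,dx=0$, which is exactly (\ref{conservediscrete}).

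For Part 2 I would write $F^{n+1}-F^n$ as an entropy difference plus an electrostatic-plus-boundary difference, and extract from each a semi-discrete-looking main term together with a nonnegative remainder. For the entropy, convexity of $\phi(c)=c\log c$ gives the exact expansion
\[
\phi(c_{ih}^{n+1})-\phi(c_{ih}^n)=(1+\log c_{ih}^n)(c_{ih}^{n+1}-c_{ih}^n)+\frac{(c_{ih}^{n+1}-c_{ih}^n)^2}{2\xi_i},
\]
with $\xi_i$ between $c_{ih}^n$ and $c_{ih}^{n+1}$; the constant "$1$" integrates away by Part 1, leaving the nonnegative remainder $R_n:=\sum_i\int (c_{ih}^{n+1}-c_{ih}^n)^2/(2\xi_i)\ge 0$. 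For the electrostatic part I would use that, with $\beta_1=0$, the form $A_1$ is symmetric and that the discrete analogue of (\ref{pp}) makes the electrostatic-plus-boundary part of $F^n$, call it $E^n$, equal $\tfrac12 A_1(\psi_h^n,\psi_h^n)$; subtracting the weak forms at levels $n$ and $n+1$, whose $\sigma$-boundary contributions cancel because $\sigma$ is time independent, yields
\[
E^{n+1}-E^n=\sum_i\int q_i\psi_h^n\,(c_{ih}^{n+1}-c_{ih}^n)\,dx+\tfrac12 A_1(\psi_h^{n+1}-\psi_h^n,\psi_h^{n+1}-\psi_h^n),
\]
the last term being nonnegative by Lemma \ref{lem1}. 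Adding the two main terms, then invoking (\ref{dgEuler}b) with $r_i=c_{ih}^{n+1}-c_{ih}^n$ and (\ref{dgEuler}a) with $v_i=p_{ih}^n$, collapses them to $-\Delta t\sum_i A_{c_{ih}^n}(p_{ih}^n,p_{ih}^n)$, exactly as in Theorem \ref{thm3.3}. Dividing by $\Delta t$ gives
\[
D_tF^n=-\sum_i A_{c_{ih}^n}(p_{ih}^n,p_{ih}^n)+\frac{R_n}{\Delta t}+\frac{1}{2\Delta t}A_1(\psi_h^{n+1}-\psi_h^n,\psi_h^{n+1}-\psi_h^n).
\]

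It then remains to dominate the two nonnegative error terms by half the dissipation, which is where the CFL condition enters and which I expect to be the main obstacle. Writing $c_{ih}^{n+1}-c_{ih}^n=\Delta t\,D_tc_{ih}^n$, both remainders reduce to $\Delta t\sum_i\|D_tc_{ih}^n\|_{L^2}^2$: for $R_n$ directly, using a positive lower bound on the solution to control $1/\xi_i$; for the $\psi$-term, by testing $A_1(\psi_h^{n+1}-\psi_h^n,\cdot)=\langle\rho_h^{n+1}-\rho_h^n,\cdot\rangle$ against $\psi_h^{n+1}-\psi_h^n$ and combining coercivity (Lemma \ref{lem1}) with a discrete Poincar\'e inequality, legitimate since $\rho_h^{n+1}-\rho_h^n$ has zero mean by Part 1. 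The decisive step is to bound $\|D_tc_{ih}^n\|_{L^2}^2$ by a negative power of $h$ times the dissipation: testing (\ref{dgEuler}a) with $v_i=D_tc_{ih}^n$ and applying the inverse inequalities (\ref{in}) to the gradient, jump, and average-of-gradient factors yields $\|D_tc_{ih}^n\|_{L^2}^2\le C h^{-2}A_{c_{ih}^n}(p_{ih}^n,p_{ih}^n)$, with $C$ depending on $k$, on $\beta_0,\beta_1$, and on the upper and lower bounds of $c_{ih}^n$. Hence both error terms are at most $C\mu\sum_i A_{c_{ih}^n}(p_{ih}^n,p_{ih}^n)$, and taking $\mu^*=1/(2C)$ gives (\ref{fdEuler}); since its right-hand side is nonpositive by Lemma \ref{lem1} for $\beta_0$ large, (\ref{ff}) follows.

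The points I anticipate as delicate are twofold. First, securing the \emph{positive lower bound} on $c_{ih}^n$, and on $c_{ih}^{n+1}$ (hence on $\xi_i$), needed to tame $R_n$; this is precisely the role of the accuracy-preserving limiter, and the constant $C$, and therefore $\mu^*$, will depend on this bound. Second, tracking how $C$ depends on the polynomial degree $k$ through the inverse inequalities (\ref{in}) and on $\|c_{ih}^n\|_\infty$, so that the threshold $\mu^*$ is explicit and uniform over the time stepping.
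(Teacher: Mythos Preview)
Your proposal is correct and follows essentially the same route as the paper: both extract the main dissipation $-\Delta t\sum_i A_{c_{ih}^n}(p_{ih}^n,p_{ih}^n)$ by testing with $p_{ih}^n$, identify the same quadratic $\psi$-remainder $\tfrac12 A_1(\psi_h^{n+1}-\psi_h^n,\psi_h^{n+1}-\psi_h^n)$, and then control both errors by testing (\ref{dgEuler}a) with $v_i=D_tc_{ih}^n$ together with the inverse inequalities (\ref{in}) to obtain $\|D_tc_{ih}^n\|_{L^2}^2\le C h^{-2}\|p_{ih}^n\|_{c_{ih}^n,E}^2$. The only cosmetic difference is in the entropy remainder: the paper uses the exact identity $b\log b-a\log a=\log a\,(b-a)+b\log(b/a)$ and further splits off a nonpositive piece $\int c_{ih}^n\log(c_{ih}^{n+1}/c_{ih}^n)\le 0$, whereas you use the Lagrange form of Taylor's remainder; both lead to a bound of the form $(\Delta t)^2\int (D_tc_{ih}^n)^2/c$ and both implicitly require $c_{ih}^{n+1}>0$, which you rightly flag as the delicate point handled by the limiter.
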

\begin{proof} Set
$$
\rho^n_h=\sum_{i=1}^m q_i c_{ih}^n +\rho_0,
$$
and
sum over $j$ in (\ref{dgEuler}) to obtain
\begin{subequations}\label{dgEuler+}
\begin{align}
&  \int_{\Omega} D_t c_{ih}^n v_i dx =- A_{c_{ih}^n}(p_{ih}^n, v_{i}),\\
 & \int_{\Omega}p_{ih}^n r_i dx =\int_{\Omega}(q_i \psi_h^n + \log c_{ih}^n )r_{i}dx,\\
 & A_1(\psi_{h}^n, \eta)
 = 
 \int_{\Omega} \rho^n_h \eta dx + \int_{\partial \Omega} \sigma \eta ds.
 \end{align}
\end{subequations}
1.   Taking  $v_i=1$ in (\ref{dgEuler+}a) yields  \eqref{conservediscrete}.\\
2. Taking  $v_i=p_{ih}^n$ in (\ref{dgEuler+}a), and $r_i=D_t c^n_{ih}$ in (\ref{dgEuler+}b), respectively, and summing over $m$,  we obtain
$$
\int_{\Omega} \sum_{i=1}^m (D_t c^n_{ih}) p_{ih}^ndx = -\sum_{i=1}^m A_{c_{ih}^n}(p_{ih}^n, p_{ih}^n)
$$
and
$$
\int_{\Omega} \sum_{i=1}^m p_{ih}^n(D_t c^n_{ih}) dx =\int_{\Omega}\sum_{i=1}^m(q_i \psi_h^n + \log c_{ih}^n )D_t c^n_{ih}dx.
$$
From  (\ref{dgEuler+}c) we see that
\begin{align}\label{ppt}
A_1( \psi_h^{n+1}-\psi_h^n, \eta) =\int_{\Omega} ( \rho_h^{n+1} -\rho^n_h)\eta dx.
\end{align}
This with  $\eta=\psi_h^n $ and $ \rho_h^n=\sum_{i=1}^{m} q_i  c_{ih}^n +\rho_0(x)$
 yields
\begin{equation}\label{a1-}
 \int_{\Omega}  \sum_{i=1}^m q_i \psi_h^n(c^{n+1}_{ih}- c^{n}_{ih})dx= \int_{\Omega}(\rho_h^{n+1}-\rho_h^n)\psi_h^n dx=A_1(\psi_{h}^{n+1} -\psi_h^n, \psi_h^n).
\end{equation}
These relations lead to
\begin{equation}\label{aa}
\int_{\Omega}\sum_{i=1}^m \log c_{ih}^n (c^{n+1}_{ih} -c^{n}_{ih}) dx  = - \Delta t \sum_{i=1}^m A_{c_{ih}^n}(p_{ih}^n- p_{ih}^n) -A_1(\psi_{h}^{n+1} -\psi_h^n, \psi_h^n).
\end{equation}
Note that  (\ref{dgEuler+}c) with $\eta=\psi_h^{n+1}-\psi_h^n$ subtracted  by (\ref{ppt}) with $\eta=\psi_h^n$ gives
\begin{align*}
&    \int_{\Omega}  [\rho_h^n(\psi_h^{n+1}-\psi_h^n)  - ( \rho_{h}^{n+1}-\rho^n) \psi_h^n ]dx + \int_{\partial \Omega}\sigma (
   \psi_h^{n+1}-\psi_h^n) ds \\
 &  \quad =A_1(\psi_h^n, \psi_h^{n+1}-\psi_h^n )- A_1(\psi_h^{n+1}-\psi_h^n, \psi_h^n)=0.
\end{align*}
Also taking  $\eta=  \psi_{h}^{n+1}+\psi_h^n$ in (\ref{ppt}) we obtain
$$
 \int_{\Omega}(\rho_h^{n+1}-\rho_h^n)(\psi_h^{n+1}+\psi_h^n)dx=A_1(\psi_{h}^{n+1}-\psi_h^n,  \psi_h^{n+1}+\psi_h^n).
$$
Adding these up leads to
\begin{equation}\label{bb}
\int_{\partial \Omega}\sigma  (\psi_h^{n+1} -\psi_h^n)ds +
\int_{\Omega} ( \rho_h^{n+1} \psi_h^{n+1} -\rho_h^n \psi_h^n) dx=A_{1}(\psi_{h}^{n+1}-\psi_h^n,  \psi_h^{n+1}+\psi_h^n).
\end{equation}
With (\ref{aa}) and (\ref{bb}) we proceed to evaluate
\begin{align*}
F^{n+1}-F^n & = \sum_{i=1}^m \int_{\Omega} \left[  c_{ih} ^{n+1}{\rm log} c_{ih}^{n+1}- c_{ih} ^n{\rm log} c_{ih}^n\right] dx +\frac{1}{2}\int_{\partial \Omega}\sigma  (\psi_h^{n+1} -\psi_h^n)ds\\
& \qquad +
  \frac{1}{2} \int_{\Omega} ( \rho_h^{n+1} \psi_h^{n+1} -\rho_h^n \psi_h^n) dx\\
 & =\sum_{i=1}^m\int_{\Omega} \log c_{ih}^n (c^{n+1}_{ih} -c^{n}_{ih})dx + \sum_{i=1}^m\int_{\Omega}c^{n+1}_{ih} \log \left(\frac{c_{ih}^{n+1}}{c_{ih}^{n}}\right)dx\\
& \qquad  + \frac{1}{2}A_{1}(\psi_{h}^{n+1}-\psi_h^n,  \psi_h^{n+1}+\psi_h^n) \\
& =-\Delta t \sum_{i=1}^m A_{c_{ih}^n}(p_{ih}^n, p_{ih}^n)
+ \sum_{i=1}^m\int_{\Omega}c^{n+1}_{ih} \log \left(\frac{c_{ih}^{n+1}}{c_{ih}^{n}}\right)dx +\frac{1}{2}A_{1}(\psi_{h}^{n+1}-\psi_h^n,  \psi_h^{n+1}-\psi_h^n)\\
& = - \Delta t \sum_{i=1}^m A_{c_{ih}^n}(p_{ih}^n, p_{ih}^n)+ \sum_{i=1}^m\int_{\Omega}c^{n}_{ih} \log \left(\frac{c_{ih}^{n+1}}{c_{ih}^{n}}\right)dx+G^n \\
& \leq  - \Delta t \sum_{i=1}^m A_{c_{ih}^n}(p_{ih}^n, p_{ih}^n) +G^n,
\end{align*}
where the non-positivity of the second term is based on $\log X \leq X-1$ and the conservation of  $\sum_{i=1}^m\int_{\Omega}c^{n}_{ih}dx$. Here
$$
G^n= \sum_{i=1}^m\int_{\Omega}(c^{n+1}_{ih}-c^{n}_{ih})\log \left(\frac{c_{ih}^{n+1}}{c_{ih}^{n}}\right)dx +\frac{1}{2}A_{1}(\psi_{h}^{n+1}-\psi_h^n,  \psi_h^{n+1}-\psi_h^n)=: G_1^n +G_2^n,
$$
which is non-negative, yet small.

It remains to figure out a sufficient restriction on the mesh ratio $\mu=\Delta t/(\Delta x)^2$ so that
\begin{equation}\label{gn}
G^n \leq \frac{\Delta t}{2}\sum_{i=1}^m A_{c_{ih}^n}(p_{ih}^n, p_{ih}^n).
\end{equation}
In (\ref{dgEuler+}a),  we take $v_i=D_t c^n_{ih}$ and use the Young inequality $ab \leq \frac{1}{4\epsilon}a^2+\epsilon b^2$ to obtain
\begin{align*}
\sum_{j=1}^N\int_{I_j} v_i^2\, dx & = - \sum_{j=1}^N \int_{I_j} c_{ih}^n  \partial_xp_{ih}^n \partial_x v_i\,dx
	- \sum_{j=1}^{N-1} \{ c_{ih}^n\} \left.\left(\widehat{\partial_xp^n_{ih}} [v_i]+ \{ \partial_x v_i\}[p^n_{ih}]\right)
	\right|_{x_{j+\frac12}}\\
&\leq \frac{1}{4\epsilon_1h^2} \sum_{j=1}^N \int_{I_j} (c_{ih}^n)^2  |\partial_xp_{ih}^n|^2 \,dx +\epsilon_1 h^2 \sum_{j=1}^N \int_{I_j} |\partial_x v_i| ^2\,dx\\
   & +  \frac{1}{4\epsilon_2h} \sum_{j=1}^{N-1} \left.\{c_{ih}^n \}^2 |\widehat{\partial_xp^n_{ih}}|^2 \right|_{x_{j+\frac12}} +\epsilon_2h \sum_{j=1}^{N-1}\left. { [v_i]^2}\right|_{x_{j+\frac12}} \\
   & + \frac{1}{4\epsilon_3h^3} \sum_{j=1}^{N-1} \left.\{c_{ih}^n \}^2 [p^n_{ih}]^2\right|_{x_{j+\frac12}}  +\epsilon_3h^3 \sum_{j=1}^{N-1} \left.\{\partial_xv_i\}^2\right|_{x_{j+\frac12}} .
\end{align*}
The use of inequalities  (\ref{in}) in Lemma 3.2   leads to
\begin{align*}
& \epsilon_1  h^2 \sum_{j=1}^N \int_{I_j} |\partial_x v_i| ^2\,dx
    + \epsilon_2 h  \sum_{j=1}^{N-1} \left.{ [v_i]^2}\right|_{x_{j+\frac12}}
    + \epsilon_3h^3  \sum_{j=1}^{N-1}\left. [\partial_xv_i]^2 \right|_{x_{j+\frac12}} \\
  &    \leq(k+1)^2   ( k(k+2) \epsilon_1 + 4\epsilon_2+ k^3(k+2) \epsilon_3)
    \sum_{j=1}^N\int_{I_j} v_i^2\, dx \\
    & = \frac{3}{4}  \sum_{j=1}^N\int_{I_j} v_i^2\, dx,
 \end{align*}
if we choose $\epsilon_i$ as
 $$
 (4\epsilon_1)^{-1} =k(k+1)^2(k+2),  \; (4\epsilon_2)^{-1} =4 (k+1)^2, \quad (4\epsilon_3)^{-1} =k^3(k+1)^2(k+2).
 $$
 This gives
\begin{align}\label{inv}
& \frac{1}{4}\sum_{j=1}^N\int_{I_j} v_i^2\, dx \leq \frac{k(k+1)^2(k+2)}{h^2}\sum_{j=1}^N \int_{I_j} (c_{ih}^n)^2  |\partial_xp_{ih}^n|^2 \,dx \\ \notag
                                           & \qquad+\frac{k^3(k+1)^2(k+2)}{h^3} \sum_{j=1}^{N-1} \left.\{(c_{ih}^n) \}^2 [p^n_{ih}]^2\right|_{x_{j+\frac12}}+\frac{4(k+1)^2}{h}\sum_{j=1}^{N-1} \left.\{c_{ih}^n\}^2 |\widehat{\partial_xp^n_{ih}}|^2 \right|_{x_{j+\frac12}}.
\end{align}
It is clear that the first two terms are bounded by $h^{-2} \|c_{ih}^n(\cdot)\|_\infty \|p_{ih}^n\|_{c_{ih}^n, E}^2$.  We next show that the last term is also bounded by $h^{-2}\|c_{ih}^n(\cdot)\|_\infty \|p_{ih}^n\|_{c_{ih}^n, E}^2$, up to constant multiplication factors. Note that
\begin{align*}
  |\widehat{\partial_xp^n_{ih}}|^2\big|_{x_{j+\frac12}} & =
 \left|\{\partial_xp_{ih}^n\}+\beta_0\frac{[p^n_{ih}]}{h}+\beta_1h[\partial_x^2p^n_{ih}]\right|^2\\
&\leq  2  \left(\beta_0^2\frac{[p^n_{ih}]^2}{h^2}+\left(\{\partial_xp^n_{ih}\}+\beta_1h[\partial_x^2p^n_{ih}] \right)^2 \right).
\end{align*}
From (\ref{g1m}) it follows that
$$
 \left.\left(\{\partial_xp^n_{ih}\}+\beta_1h[\partial_x^2p^n_{ih}]\right)^2\right|_{x_{j+\frac12}}  \leq \frac{ \Gamma(\beta_1, c_{ih}^n )}{2h}
\left(\int_{I_j}+\int_{I_{j+1}}\right)c_{ih}^n |\partial_xp_{ih}|^2dx.
$$
Hence
\begin{align*}
 \sum_{j=1}^{N-1} \left. \{c_{ih}^n \}^2 |\widehat{\partial_xp^n_{ih}}|^2\right|_{x_{j+\frac12}}
 &
 \leq \frac{2}{h} \max\{\beta_0,  \Gamma(\beta_1, c_{ih}^n ) \{c_{ih}^n \} \}
 \|c_{ih}^n(\cdot)\|_\infty \|p^n_{ih}\|_{c_{ih}^n, E}^2\\
 & =\frac{2\beta_0}{h}\|c_{ih}^n(\cdot)\|_\infty \|p^n_{ih}\|_{c_{ih}^n, E}^2.
\end{align*}
Upon insertion into (\ref{inv}) we obtain
\begin{align*}
\sum_{j=1}^N\int_{I_j} v_i^2\, dx \leq  \frac{C(k, \beta_0) ||c_{ih}^n(\cdot)||_\infty}{h^2}\|p^n_{ih}\|_{c_{ih}^n, E}^2,
\end{align*}
where
\begin{equation}\label{ck}
C(k, \beta_0):= 4(k+1)^2 \left(k(k+2)\max\{1, k^2/\beta_0\}+ 8\beta_0\} \right).
\end{equation}
Note that
\begin{align*}
G_1^n & \leq \sum_{i=1}^m\int_{\Omega}\frac{(c_{ih}^{n+1}-c_{ih}^n)^2}{c_{ih}^n}dx=\sum_{i=1}^m\int_{\Omega}\frac{
v_i^2}{c_{ih}^n}dx(\Delta t)^2\\
& \leq \mu \Delta t \frac{C(k, \beta_0) ||c_{ih}^n(\cdot)||_\infty}{\min_{i, x}{c_{ih}^n}}\sum_{i=1}^m\|p^n_{ih}\|_{c_{ih}^n, E}^2\\
& \leq \mu \Delta t \frac{C(k, \beta_0) ||c_{ih}^n(\cdot)||_\infty}{\gamma \min_{i, x}{c_{ih}^n}}
\sum_{i=1}^mA_{c_{ih}^n}(p_{ih}^n, p_{ih}^n) \\
& \leq \frac{\Delta t}{4}\sum_{i=1}^mA_{c_{ih}^n}(p_{ih}^n, p_{ih}^n),
\end{align*}
if the mesh ratio satisfies
$$
\mu  \leq \frac{\gamma \min_{i, x}{c_{ih}^n}}{4C(k, \beta_0) \max_i ||c_{ih}^n(\cdot)||_\infty}.
$$
It remains to bound $G_2^n$.  From (\ref{ppt}) it follows that for $\xi:= \psi_h^{n+1}-\psi_h^n$,
\begin{align}\label{ppt+}
A_1(\xi, \xi) =\sum_{i=1}^m q_i \int_{\Omega} ( c_{ih}^{n+1} -c_{ih}^n)\xi dx= \Delta t
\sum_{i=1}^m q_i \int_{\Omega} v_i \xi dx \leq \Delta t
\sum_{i=1}^m |q_i| \| v_i\| \|\xi\|.
\end{align}
Note that $A_1(\cdot, \cdot)$  is a symmetric bilinear operator (since $\beta_1=0$ in (\ref{flux}) for $\psi_h$), and also that
 $A_1(\xi, \xi)=0$ if and only if $\xi\equiv const$, therefore
$$
c=\inf_{\{\xi \in V_h, \xi \not={\rm const}\}} \frac{h^2A_1(\xi, \xi)}{\|\xi\|^2},
$$
is positive.  A simple rescaling suggests that $c$ is also independent of $h$.   Hence  $\|\xi\|^2 \leq c^{-1}h^2A_1(\xi, \xi)$, which when inserted into (\ref{ppt+}), set $C_1= \sum_{i=1}^mq_i^2/c^2$,   leads to
\begin{align*}
A_1(\xi, \xi) & \leq C_1 (\Delta t)^2 h^2 \sum_{i=1}^m \| v_i\|^2  \leq C_1C(k, \beta_0) \max_i ||c_{ih}^n(\cdot)||_\infty  (\Delta t)^2\sum_{i=1}^m \|p^n_{ih}\|_{c_{ih}^n, E}^2\\
& \leq C_1 (\Delta t)^2 \frac{C(k, \beta_0) \max_i ||c_{ih}^n(\cdot)||_\infty}{\gamma} \sum_{i=1}^mA_{c_{ih}^n}(p_{ih}^n, p_{ih}^n) \\
& \leq \frac{\Delta t}{4}\sum_{i=1}^mA_{c_{ih}^n}(p_{ih}^n, p_{ih}^n),
\end{align*}
as long as the time step also satisfies
$$
\Delta t  \leq \frac{\gamma }{4C_1C(k, \beta_0) \max_i ||c_{ih}^n(\cdot)||_\infty}.
$$
Collecting the above estimates on $G_i^n (i=1, 2)$ we obtain (\ref{gn}), if we take
$$
\mu^* = \frac{\gamma }{4C(k, \beta_0) \max_i ||c_{ih}^n(\cdot)||_\infty} \min\{C_1^{-1}h^{-2}, \min_{i, x}{c_{ih}^n}\}.
$$
This ends the proof.
\end{proof}

\begin{remark} In Theorem \ref{thmEuler}, $c_{ih}(x)$ is assumed to be positive in each cell $I_j$ to make $p_{ih}=q_i\psi_h+\log c_{ih}$ well defined. In numerical simulations, we enforce this by imposing a limiter defined in  \eqref{ureconstruct}, based on positive cell averages. As shown in \cite{LW15},  such a limiter does not destroy the order of accuracy. Moreover, positivity of cell averages for each $c_{ih}$ is achieved for $\beta_0>1$ and $\beta_1\in (1/8, 1/4)$ when the coupling potential $\psi$ is zero. For the general PNP system, we numerically identify parameter pairs $(\beta_0, \beta_1)$ so that cell averages remain positive, as confirmed in Example 1.
\end{remark}

{
In our numerical simulation with $k=1,2,3$, we use the second order explicit Runge-Kutta method (RK2, also called Heun's method) for time discretization to solve the ODE system  of the form $\dot a=\mathfrak{L}(\textbf{a,t})$:
\begin{align}\label{RK2}
{\textbf{a}^{(1)}} &= \textbf{a}^n + \Delta t \mathfrak{L}(\textbf{a}^n,t_n), \nonumber \\
{\textbf{a}^{*}} &= \textbf{a}^{(1)} + \Delta t \mathfrak{L}(\textbf{a}^{{1}},t_{n+1}),  \\
\textbf{a}^{n+1} &=   \frac{1}{2}\textbf{a}^n + \frac{1}{2}\textbf{a}^{*}.\nonumber 
\end{align}

\begin{corollary}\label{corollary1} Consider the RK2 time discretization\eqref{RK2}. Assume that  $c_{ih}^n(x)$ and the intermediate states $c_{ih}^{(1)}$ and $c_{ih}^{*}$ are all positive in each $I_j$. There exists $\mu^*>0$ such that if the mesh ratio $\mu=\frac{\Delta t}{\Delta x^2} \in (0, \mu^*)$, then the fully discrete free energy
$$
F (c^n_h,\psi^n_h) =\sum_{j=1}^N  \int_{I_j}  \left[ \sum_{i=1}^m c_{ih} ^n{\rm log} c_{ih}^n+\frac{1}{2} \left( \sum_{i=1}^m q_i c_{ih}^n+\rho_0 \right) \psi_h^n \right] dx +\frac{1}{2}\int_{\partial \Omega}\sigma  \psi_h^n ds
$$
satisfies
$$
F^{n+1}\leq F^n,
$$
provided that $\beta_0$ is suitably large,  and $\beta_1=0$ in $Fl(\psi_h)$  defined in (\ref{flux}) and (\ref{bpsi}).
\end{corollary}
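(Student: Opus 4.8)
The plan is to exploit the fact that the RK2 update \eqref{RK2} is strong-stability-preserving (SSP): the final state is a \emph{convex combination} of two forward Euler stages, each of which decreases the free energy by Theorem \ref{thmEuler}. The one genuinely new structural ingredient is the convexity of the discrete free energy viewed as a functional of the concentrations $c_h$ alone.

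First I would rewrite the free energy as a functional of $c_h$ only. Testing the summed Poisson relation \eqref{pp} (equivalently (\ref{dgEuler+}c)) with $\eta=\psi_h$ gives $A_1(\psi_h,\psi_h)=\int_\Omega\rho_h\psi_h\,dx+\int_{\partial\Omega}\sigma\psi_h\,ds$, so that the electrostatic and boundary contributions to $F$ amount exactly to $\tfrac12 A_1(\psi_h,\psi_h)$ and
$$
F(c_h)=\sum_{i=1}^m\int_\Omega c_{ih}\log c_{ih}\,dx+\tfrac12 A_1(\psi_h[c_h],\psi_h[c_h]),
$$
where $\psi_h[c_h]$ solves the discrete Poisson problem with data $\rho_h=\sum_i q_i c_{ih}+\rho_0$ and hence depends \emph{affinely} on $c_h$ (after fixing the additive-constant ambiguity by any linear normalization; the value of $F$ is independent of that choice, since $A_1(\psi_h+C,\psi_h+C)=A_1(\psi_h,\psi_h)$ for constant $C$). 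Because $x\log x$ is convex and $A_1(\cdot,\cdot)$ is a nonnegative symmetric bilinear form (Lemma \ref{lem1} with $M=1$, using $\beta_1=0$ and $\beta_0>\Gamma(\beta_1,1)$), the electrostatic term is a convex quadratic composed with an affine map and the entropy term is convex; thus $c_h\mapsto F(c_h)$ is convex on the cone of cellwise-positive states.

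Next I would identify the two stages of \eqref{RK2} as forward Euler steps of the form \eqref{dgEuler}: $c^{(1)}_h=c^n_h+\Delta t\,\mathfrak{L}(c^n_h)$ and $c^{*}_h=c^{(1)}_h+\Delta t\,\mathfrak{L}(c^{(1)}_h)$, where $\mathfrak{L}$ denotes the spatial DG operator. Applying Theorem \ref{thmEuler} to the base state $c^n_h$ (positive by hypothesis) gives $F(c^{(1)}_h)\le F(c^n_h)$, and applying it again to the base state $c^{(1)}_h$ (positive by hypothesis) gives $F(c^{*}_h)\le F(c^{(1)}_h)$; choosing $\mu^*$ as the minimum of the two step-size thresholds that Theorem \ref{thmEuler} furnishes for $c^n_h$ and $c^{(1)}_h$ makes both bounds hold at once, whence $F(c^{*}_h)\le F(c^n_h)$. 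Finally, since the coefficient update is linear the last stage reads $c^{n+1}_h=\tfrac12 c^n_h+\tfrac12 c^{*}_h$ as an element of $V_h$, which is cellwise positive because $c^n_h,c^{*}_h>0$, so $F^{n+1}$ is well defined and convexity yields
$$
F^{n+1}=F\Big(\tfrac12 c^n_h+\tfrac12 c^{*}_h\Big)\le\tfrac12 F(c^n_h)+\tfrac12 F(c^{*}_h)\le F^n,
$$
which is the claim.

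I expect the main obstacle to lie in the convexity step rather than in the SSP bookkeeping: one must verify carefully that $\tfrac12 A_1(\psi_h[c_h],\psi_h[c_h])$ is genuinely convex in $c_h$, which rests on $A_1$ being a nonnegative form (true only because $\beta_1=0$ and $\beta_0$ is large, via Lemma \ref{lem1}) together with the linearity of the discrete Poisson solution operator. A secondary technical point is that the threshold $\mu^*$ now depends on the intermediate state $c^{(1)}_h$ through its positivity lower bound and its $L^\infty$ bound, so the hypotheses that $c^{(1)}_h$ and $c^{*}_h$ remain positive in every cell are precisely what is needed to invoke Theorem \ref{thmEuler} at the second stage.
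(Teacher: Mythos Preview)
Your proposal is correct and follows essentially the same approach as the paper: both apply Theorem~\ref{thmEuler} to each forward Euler stage to get $F^{*}\le F^{(1)}\le F^{n}$, rewrite the electrostatic part of $F$ as $\tfrac12 A_1(\psi_h,\psi_h)$, and then use convexity of $c\log c$ together with the nonnegativity of $A_1$ (which requires $\beta_1=0$ and $\beta_0$ large) to handle the convex combination $c_h^{n+1}=\tfrac12 c_h^n+\tfrac12 c_h^{*}$. The only cosmetic difference is that you package the convexity as ``$c_h\mapsto F(c_h)$ is convex because $\psi_h[c_h]$ is affine in $c_h$,'' whereas the paper uses $\psi_h^{n+1}=\tfrac12\psi_h^n+\tfrac12\psi_h^{*}$ directly and checks convexity of $w\mapsto A_1(w,w)$ via the identity $\theta A_1(u,u)+(1-\theta)A_1(v,v)-A_1(\theta u+(1-\theta)v,\theta u+(1-\theta)v)=\theta(1-\theta)A_1(u-v,u-v)\ge 0$; your added remark that $F$ is invariant under the additive constant in $\psi_h$ is a nice clarification the paper leaves implicit.
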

\begin{proof} From (\ref{ff}) in Theorem \ref{thmEuler} it follows that
 \begin{equation}\label{fs}
 F^{*}:=F(c^*_h,\psi^*_h)\leq F^{(1)}:=F(c^{(1)}_h,\psi^{(1)}_h)\leq F^n:=F(c^n_h,\psi^n_h).
 \end{equation}
 Note from (\ref{dgEuler+}), $F^n$ can also be written as
 $$
 F^n=\sum_{j=1}^N  \int_{I_j}  \sum_{i=1}^m c_{ih}^n {\rm log} \left(c_{ih}^n\right)dx+ \frac{1}{2}  A_1\left(\psi_{h}^n,
\psi_{h}^n \right).
 $$
 Hence
$$
F^{n+1} =\sum_{j=1}^N  \int_{I_j}  \left[ \sum_{i=1}^m \left(\frac{1}{2}c_{ih}^n +\frac{1}{2}c_{ih}^{*}\right){\rm log} \left(\frac{1}{2}c_{ih}^n +\frac{1}{2}c_{ih}^{*}\right)\right]dx+ \frac{1}{2}  A_1\left(\frac{1}{2}\psi_{h}^n +\frac{1}{2}\psi_{h}^{*},\frac{1}{2}\psi_{h}^n +\frac{1}{2}\psi_{h}^{*}\right),
$$
where by  \eqref{RK2}, we have used
 $$
 c_{ih}^{n+1}=\frac{1}{2}c_{ih}^n +\frac{1}{2}c_{ih}^{*};\quad \psi_{h}^{n+1}=\frac{1}{2}\psi_{h}^n +\frac{1}{2}\psi_{h}^{*}.
 $$
Using the fact that $A_1(w, w)$ is a convex functional in $w$ in the sense that
$$
A_1(\theta u+(1-\theta)v, \theta u+(1-\theta)v) \leq \theta A_1(u, u) +(1-\theta)A_1(v, v), \forall \theta \in [0, 1],
$$
which may be verified by the following identity
$$
\theta A_1(u, u) +(1-\theta)A_1(v, v) -A_1(\theta u+(1-\theta)v, \theta u+(1-\theta)v)=\theta(1-\theta)A_1(u-v, u-v)
$$
and $A_1(u-v, u-v) \geq 0$ from (\ref{amv}).
Thus
\begin{equation}\label{psiineq}
A_1\left(\frac{1}{2}\psi_{h}^n +\frac{1}{2}\psi_{h}^{*},\frac{1}{2}\psi_{h}^n +\frac{1}{2}\psi_{h}^{*}\right) \leq\frac{1}{2} A_1(\psi^n, \psi^n)+\frac{1}{2}A_1(\psi^*,\psi^*).
\end{equation}
Since $c\log c$ is convex in $c$, we also have
\begin{equation}\label{clncineq}
\left(\frac{1}{2}c_{ih}^n +\frac{1}{2}c_{ih}^{*}\right){\rm log} \left(\frac{1}{2}c_{ih}^n +\frac{1}{2}c_{ih}^{*}\right)\leq \frac{1}{2}c_{ih}^n {\rm log} \left(c_{ih}^n \right)+ \frac{1}{2}c_{ih}^* {\rm log} \left(c_{ih}^* \right)
\end{equation}
Equations \eqref{psiineq} and \eqref{clncineq} imply that
 $$
 F^{n+1}\leq \frac{1}{2}F^n +\frac{1}{2}F^{*}.
 $$
 This together with (\ref{fs}) leads to $F^{n+1} \leq F^n$, as claimed.
\end{proof}

\begin{remark}
Corollary \ref{corollary1} suggests that the DG scheme (\ref{dg}) with the time evolution by the strong stability preserving (SSP) Runge-Kutta method \cite{SO88}  does not increase the free energy at each time step, as long as the time step is suitably small. Hence high order SSP Runge-Kutta time discretization can be used for high order DG simulations, e.g., $k\geq 4$.
\end{remark}

\begin{remark} The time step restriction $\Delta t \sim O(\Delta x)^2$  is obviously a drawback of the explicit time discretization.
Usually one would use implicit in time discretization for diffusion and explicit time discretization for the nonlinear drift term (called IMEX in the literature) so that the time step restriction could be relaxed.  Unfortunately, formulation (\ref{cqp}) does not support such a separation.
\end{remark}
}

\subsection{Preservation of steady states}
If we start with initial data $c_{ih}^0$, already at steady states, i.e., ${\rm \log} c_{ih}^0  +
 q_i \psi^0_h(x)=C_i$,  it follows from (\ref{dgEuler}b) that $p_{ih}^0=C_i$.  Furthermore,  (\ref{dgEuler}a) implies that $c_{ih}^1=c_{ih}^0\in V_h$, which when inserted into  (\ref{dgEuler}c) gives $\psi_h^1=\psi_h^0$ (up to an constant, fixed to $0$);  hence  $
 {\rm \log} c_{ih}^1  + q_i \psi^1_h(x)=C_i.$  By induction we have
$$
{\rm \log} c_{ih}^n  +
 q_i \psi_h^n(x)=C_i \quad \forall n\in \mathbb{N}.
$$
This says that the DG scheme (\ref{dgEuler}) preserves  steady states.    Moreover, we can show that in some cases  the numerical solution tends
asymptotically toward a steady state, independent of initial data.  More precisely, we have the following result.
\begin{theorem}  Let the assumptions in Theorem \ref{thmEuler} be met,  and $(c_{ih}^n, p_{ih}^n, \psi_h^n) $ be the numerical solution to the fully discrete DG scheme (\ref{dgEuler}),   then the limits of $(c_{ih}^n, p_{ih}^n, \psi_h^n)$ as  $n\to \infty$ satisfy
$$
p^*_{ih} = C_i, \quad {\rm \log} c_{ih}^n  +
 q_i \psi_h^n(x)  \in C_i + V_h^\bot,
$$
where $C_i$ are  some constants.
\end{theorem}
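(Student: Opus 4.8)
The plan is to exploit the monotone free-energy decay of Theorem \ref{thmEuler} together with the coercivity of $A_M$ from Lemma \ref{lem1}. First I would check that $F^n$ is bounded below, uniformly in $n$. Using (\ref{dgEuler+}c) with $\eta=\psi_h^n$, the electrostatic part of $F^n$ can be rewritten as $\tfrac12 A_1(\psi_h^n,\psi_h^n)$, which by (\ref{amv}) with $M=1$ is non-negative once $\beta_0>\Gamma(\beta_1,1)$ (guaranteed by the hypotheses of Theorem \ref{thmEuler}, where $\beta_1=0$ and $\beta_0>k^2=\Gamma(0,1)$). For the entropic part, the pointwise bound $c\log c\ge -1/e$ gives $\sum_i\int_\Omega c_{ih}^n\log c_{ih}^n\,dx\ge -m|\Omega|/e$. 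Hence $F^n\ge -m|\Omega|/e$ for all $n$, and since Theorem \ref{thmEuler} gives $F^{n+1}\le F^n$, the sequence $F^n$ is monotone and bounded, so it converges to some $F^*$.

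Next I would telescope the dissipation estimate (\ref{fdEuler}). Writing $D_tF^n\le -\tfrac12\sum_i A_{c_{ih}^n}(p_{ih}^n,p_{ih}^n)$ as
$$
\frac{\Delta t}{2}\sum_{i=1}^m A_{c_{ih}^n}(p_{ih}^n,p_{ih}^n)\le F^n-F^{n+1},
$$
and summing over $n$ yields $\tfrac{\Delta t}{2}\sum_{n\ge0}\sum_i A_{c_{ih}^n}(p_{ih}^n,p_{ih}^n)\le F^0-F^*<\infty$. Every summand is non-negative by (\ref{amv}), so the general term must vanish, giving $A_{c_{ih}^n}(p_{ih}^n,p_{ih}^n)\to0$ and therefore, by Lemma \ref{lem1}, $\|p_{ih}^n\|_{c_{ih}^n,E}\to0$ for each $i$. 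By the definition of the energy norm this forces the weighted cell gradients $\int_{I_j}c_{ih}^n(\partial_x p_{ih}^n)^2\,dx$ and the weighted interface jumps $\{c_{ih}^n\}[p_{ih}^n]^2_{j+1/2}$ to tend to zero.

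To pass to the limit I would use that $V_h$ is finite-dimensional. Mass conservation (\ref{conservediscrete}) together with positivity confines each $c_{ih}^n$ to a bounded, hence precompact, subset of $V_h$, and the uniform bound on $F^n$ controls $\|\psi_h^n\|_E$ (modulo the additive constant, which I fix by the normalization used for $\psi_h$), so $(c_{ih}^n,\psi_h^n)$ admits convergent subsequences. Along any such subsequence with positive limit $c_{ih}^*$, the vanishing of the weighted gradients and jumps shows that $p_{ih}^*$ has neither cell gradient nor interface jump, hence is a global constant $C_i$, which is the assertion $p_{ih}^*=C_i$. Finally, (\ref{dgEuler}b) states that $p_{ih}^n$ is exactly the $L^2$-projection of $\log c_{ih}^n+q_i\psi_h^n$ onto $V_h$, i.e.\ $\log c_{ih}^n+q_i\psi_h^n-p_{ih}^n\in V_h^\bot$; passing to the limit gives $\log c_{ih}^*+q_i\psi_h^*\in C_i+V_h^\bot$, the claimed steady-state structure.

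I expect the main obstacle to be precisely this last passage to the limit: the weighted seminorm $\|p_{ih}^n\|_{c_{ih}^n,E}$ controls $p_{ih}^n$ only where $c_{ih}^n$ is bounded away from zero, so a degeneration of $c_{ih}^n$ near an interface (or drift of the constant part of $p_{ih}^n$, which the seminorm does not detect) must be ruled out. Handling this cleanly likely requires either a uniform-in-$n$ positivity lower bound for $c_{ih}^n$—which the positivity-preserving limiter of Section~4 is meant to supply—or a subsequential compactness argument that also tracks the mean of $p_{ih}^n$, which by the projection identity equals the mean of $\log c_{ih}^n+q_i\psi_h^n$.
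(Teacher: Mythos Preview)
Your proposal is correct and follows essentially the same route as the paper: use the monotone decay of $F^n$ together with a lower bound to force $\sum_i A_{c_{ih}^n}(p_{ih}^n,p_{ih}^n)\to 0$, invoke the coercivity of $A_M$ to conclude that the limit $p_{ih}^*$ is a global constant, and then read off the $V_h^\bot$ relation from the projection identity (\ref{dgEuler}b). You are in fact more careful than the paper's own proof---supplying the explicit lower bound on $F^n$, the compactness argument in the finite-dimensional space $V_h$, and correctly flagging the need for a uniform positivity lower bound on $c_{ih}^n$ to make the weighted seminorm nondegenerate---all of which the paper leaves implicit.
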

\begin{proof}
Since $F^n$ is non-increasing and bounded from below,  we have
$$
\lim_{n \to \infty} F^n=\inf\{F^n\}.
$$
Observe from (\ref{fdEuler}) that
$$
F^{n+1} -F^{n} \leq -\frac{\Delta t}{2}\sum_{i=1}^m A_{c_{ih}^n}(p_{ih}^n, p_{ih}^n) \leq 0.
$$
When passing to the limit $n\to \infty$ we have $\lim_{n\to \infty} \sum_{i=1}^m A_{c_{ih}^n}(p_{ih}^n, p_{ih}^n)=0$.  This and the coercivity of $A_{c_{ih}^n}(p_{ih}^n, p_{ih}^n)$ imply the limit of $p_{ih}^n$, denoted by  $p_{ih}^*$, must be constant in each computational cell and the whole domain.  These when inserted into (\ref{dgEuler}b) gives the
desired result.  The proof is complete.
\end{proof}

\section{Numerical Implementation}
\subsection{Computing $\psi_h$ } In order to compute a unique $\psi_h$, we fix $\psi(a)$ as being given, and define
\begin{subequations}\label{fluxpsi}
\begin{align}
& Fl(\psi_h)(a) =\beta_0\frac{(\psi_{h}^+ - \psi(a))}{h}+\frac{1}{2}(\sigma_a +\psi_{hx}^+), \quad \{\psi\}=(\psi_h^++\psi(a))/2,\\
&Fl(\psi_h)(b) =\sigma_b, \quad \{\psi\}=\psi_h^-.
\end{align}
\end{subequations}
We add (\ref{dg}c) over all $j=1\cdots N$ and use the modified boundary condition (\ref{fluxpsi})  to obtain the following
\begin{align}\label{ga}
A(\psi_h, v)=L(v), \quad \forall v \in V_h,
\end{align}
where
\begin{align*}
A(\psi_h, v ) & = A_1(\psi_h, v)
+\left[ \left( \beta_0 \frac{\psi_h^+}{h}   +  \frac{1}{2} \psi_{hx} \right) v^+  + \frac{1}{2} \psi_{h}^+ v_{x}^+ \right]_{x_{1/2}}
\end{align*}
and
$$
L(v)=\int_{a}^b \rho(x) v(x)dx 
+\left[\left( \beta_0 \frac{\psi(a)}{h}  -  \frac{1}{2} \sigma_a \right) v^+   + \frac{1}{2} \psi(a) v_{x}^+\right]_{x_{1/2}} +\sigma_bv_{N+1/2}^-.
$$

\begin{lemma} For $\beta_0\geq \max\{\Gamma(\beta_1, 1), k^2\}$, and $\rho$ given, 
 there exists a unique  $\psi_h$ to (\ref{dg}c) with (\ref{flux}) and  the boundary fluxes in (\ref{fluxpsi}).
\end{lemma}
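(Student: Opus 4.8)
The plan is to reduce the statement to a coercivity estimate for the bilinear form $A(\cdot,\cdot)$ of (\ref{ga}) and then mimic the proof of Theorem 3.1, now taking advantage of the boundary penalty created by prescribing $\psi(a)$. Since (\ref{ga}) is a square linear system on the finite-dimensional space $V_h$, existence is equivalent to uniqueness, so it suffices to show $A(v,v)=0\Rightarrow v\equiv 0$. I would therefore set $v=\psi_{h1}-\psi_{h2}$ for two solutions sharing the same data; by linearity $v$ solves $A(v,w)=0$ for all $w\in V_h$, and the choice $w=v$ gives $A(v,v)=0$.

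Next I would read off $A(v,v)$ from (\ref{ga}). The right endpoint $x=b$ contributes only to $L$, so the sole boundary term beyond $A_1(v,v)$ sits at $x_{1/2}=a$; there the two half-weighted pieces $\tfrac12 v_x^+v^+$ and $\tfrac12 v^+v_x^+$ merge, giving
$$
A(v,v)=A_1(v,v)+\frac{\beta_0}{h}\,(v^+_{1/2})^2+(vv_x)^+_{1/2}.
$$
To the interior form I would apply (\ref{amv}) of Lemma \ref{lem1} with $M=1$, i.e. $A_1(v,v)\ge \gamma\|v\|_E^2+\tfrac12\int_{I_1\cup I_N}v_x^2\,dx$; the extra boundary integral over $I_1$ is exactly the reserve needed to absorb the indefinite cross term.

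The crux is to dominate the sign-indefinite term $(vv_x)^+_{1/2}$. By Young's inequality,
$$
(vv_x)^+_{1/2}\ge -\frac{\beta_0}{2h}(v^+_{1/2})^2-\frac{h}{2\beta_0}(v_x^+)^2_{1/2},
$$
so the penalty $\tfrac{\beta_0}{h}(v^+_{1/2})^2$ retains the clean remainder $\tfrac{\beta_0}{2h}(v^+_{1/2})^2$, while the gradient trace is handled by the local inverse inequality $h(v_x^+)^2_{1/2}\le \Gamma(0,1)\int_{I_1}v_x^2\,dx$ with $\Gamma(0,1)=k^2$ (from (\ref{gb}) at $\beta_1=0$). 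Because $\beta_0\ge k^2$, the term $\tfrac{h}{2\beta_0}(v_x^+)^2_{1/2}$ is no larger than $\tfrac12\int_{I_1}v_x^2\,dx$ and is swallowed by the boundary reserve. Collecting everything yields
$$
A(v,v)\ge \gamma\|v\|_E^2+\frac{\beta_0}{2h}(v^+_{1/2})^2\ge 0.
$$
Setting $A(v,v)=0$ forces both terms to vanish: $\|v\|_E=0$ makes $v_x$ vanish on every cell and all interior jumps $[v]$ vanish, so $v$ is a single global constant, and $(v^+_{1/2})^2=0$ then pins that constant to zero. Hence $v\equiv 0$, which gives uniqueness and therefore existence.

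I expect the genuine obstacle to be precisely the control of this boundary cross term, which is what distinguishes the present lemma from Theorem 3.1 (there one recovered uniqueness only up to an additive constant). The argument succeeds only because the two hypotheses cooperate: $\beta_0\ge k^2$ validates the inverse inequality that bounds the gradient trace, while the additional term $\tfrac12\int_{I_1\cup I_N}v_x^2\,dx$ in (\ref{amv}) --- singled out in the remark after Lemma \ref{lem1} as being needed to control boundary contributions --- provides exactly the slack to absorb it. The remaining hypothesis $\beta_0\ge\Gamma(\beta_1,1)$ is what keeps $A_1$ coercive to begin with.
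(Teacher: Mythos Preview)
Your proposal is correct and follows essentially the same approach as the paper's own proof: reduce to uniqueness on the finite-dimensional space, compute $A(v,v)=A_1(v,v)+\tfrac{\beta_0}{h}(v_{1/2}^+)^2+(vv_x)_{1/2}^+$, invoke the coercivity (\ref{amv}) for $A_1$, absorb the cross term via Young's inequality and the inverse trace bound $h(v_x^+)_{1/2}^2\le k^2\int_{I_1}v_x^2\,dx$, and conclude $v\equiv 0$ from $\gamma\|v\|_E^2+\tfrac{\beta_0}{2h}(v_{1/2}^+)^2=0$. Your write-up is in fact slightly more explicit than the paper's in explaining why the reserve $\tfrac12\int_{I_1}v_x^2\,dx$ from (\ref{amv}) is exactly what is needed.
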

\begin{proof}
For the same reason as mentioned earlier, it suffices to prove the uniqueness.  Let $v=\psi_{h1}-\psi_{h2}$ for two
different solutions $\psi_{hi}, i=1, 2$, then
$$
A(v, v)=0.
$$
Note that
\begin{align} \notag
A(v, v) & = A_1(v, v)  
+\frac{\beta_0}{h}(v_{1/2}^+)^2 + (vv_x)^+_{1/2} \\ \notag
&  \geq  \gamma \|v\|_E^2  + \frac{1}{2}\int_{I_1}v_x^2dx + \frac{\beta_0}{2h}(v_{1/2}^+)^2 -\frac{h}{2\beta_0}(v_{x}^+)^2_{1/2} \\ \label{co}
& \geq \gamma \|v\|_E^2 +\frac{\beta_0}{2h}(v_{1/2}^+)^2,
\end{align}
provided that $\beta_0$ is large enough so that
$$
\beta_0 \geq \sup_{v\in P^k(I_1)}\frac{h(v_x)^2_{1/2}}{\int_{I_1}v_x^2dx}=\sup_{\eta \in P^{k-1}([-1, 1])} \frac{2u^2(1)}{\int_{-1}^1 u^2(\xi)d\xi}=\Gamma(0, 1)=k^2.
$$
Hence every term on the right of (\ref{co}) must be zero, which yields $v\equiv 0$. Uniqueness thus follows.
\end{proof}
\begin{remark} The above result provides a guide for the choices of $(\beta_0, \beta_1)$ in numerically solving the Poisson equation.  We shall take $\beta_1=0$ in solving the Poisson equation so that to also ensure the entropy dissipation property (see Theorem \ref{thm3.3}), hence it suffices to take $\beta_0>k^2=\Gamma(0, 1)$.
\end{remark}
\subsection{Positivity-preserving limiter}
In the scheme formulation involving the projection of $p_i = q_i \psi + log c_i$,  concentrations $c_{ih}$
needs to be strictly positive at each time step, and we follow \cite{LW15} to enforce positivity through some accuracy-preserving limiter based on positive cell averages.

Let $w_h \in  P^k(I_j)$  be an approximation to a smooth function $w(x) \geq 0$, with cell averages $\bar{w}_j>\delta$
for $\delta$ being some small positive parameter or zero.  We then consider another polynomial in $P^k(I_j)$
so that
\begin{equation}
w_h^{\delta}(x)= \bar{w}_j+\frac{\bar{w}_j-\delta}{\bar{w}_j-\min_{I_j} w_h(x)} (w_h(x)-\bar{w}_j),\quad
 \text{ if } \min_{I_j} w_h(x)<\delta. \label{ureconstruct}
\end{equation}
This reconstruction maintains same cell averages and satisfies  $$\min_{I_j} w^\delta(x)\geq\delta.$$
\begin{lemma} [cf. \cite{LW15}] If $\bar{w}_j>\delta$,  then $w^{\delta}$ satisfies the estimate
$$|w^{\delta}(x)-w_h(x)|\leq C(k) \left( ||w_h(x)-w(x)||_\infty+ \delta\right),\quad \forall x\in I_j,$$
where $C(k)$ is a constant depending on $k$.  This says that the reconstructed $w^{\delta}(x)$ in \eqref{ureconstruct}
does not destroy the accuracy when $\delta<h^{k+1}$.
\end{lemma}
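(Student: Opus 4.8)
The plan is to reduce everything to an elementary, scale-invariant estimate on zero-mean polynomials. First I would dispose of the trivial case: if $\min_{I_j} w_h(x) \ge \delta$ then by definition $w^\delta = w_h$ on $I_j$ and the asserted bound holds with constant $0$. So assume $m_j := \min_{I_j} w_h(x) < \delta$, which is exactly when the reconstruction \eqref{ureconstruct} is active. Writing out the definition and collecting terms, a direct computation gives the closed form
\[
w^\delta(x) - w_h(x) = \bigl(w_h(x) - \bar w_j\bigr)\,\frac{m_j - \delta}{\bar w_j - m_j},
\]
so that, since $\bar w_j > \delta > m_j$ forces both $\bar w_j - m_j > 0$ and $\delta - m_j > 0$,
\[
|w^\delta(x) - w_h(x)| = \frac{|w_h(x) - \bar w_j|}{\bar w_j - m_j}\,(\delta - m_j).
\]

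Next I would control the two factors separately. For the second factor I use that the limiter is only ever applied to an approximation $w_h$ of a nonnegative function $w$: from $w \ge 0$ and $w_h = w + (w_h - w)$ we get $w_h \ge -\|w_h - w\|_\infty$ pointwise, hence $m_j \ge -\|w_h - w\|_\infty$ and therefore $\delta - m_j \le \delta + \|w_h - w\|_\infty$. For the first factor I claim the scale-invariant bound
\[
\frac{|w_h(x) - \bar w_j|}{\bar w_j - m_j} \le C(k), \qquad x \in I_j,
\]
with $C(k)$ depending only on the polynomial degree $k$. Setting $P := w_h - \bar w_j$, note that $P \in P^k(I_j)$ has zero mean over $I_j$ and that $\bar w_j - m_j = -\min_{I_j} P$; the ratio $|P(x)|/(-\min_{I_j} P)$ is invariant under the affine map $I_j \to [-1,1]$, so it suffices to bound it on the reference cell.

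The one genuine point is this reference-cell bound, which I would obtain by a compactness / norm-equivalence argument. Let $V_0$ be the (finite-dimensional) space of polynomials of degree $\le k$ on $[-1,1]$ with zero mean. The functional $N(P) := -\min_{[-1,1]} P$ is continuous, positively homogeneous, and strictly positive on $V_0 \setminus \{0\}$: a nonzero zero-mean polynomial must take a strictly negative value somewhere, so $\min P < 0$. Minimizing the continuous positive function $N$ over the compact unit sphere $\{\|P\|_\infty = 1\}$ in $V_0$ yields a constant $c_0 > 0$ with $N(P) \ge c_0 \|P\|_\infty$, i.e. $-\min P \ge c_0 \|P\|_\infty$ for every $P \in V_0$. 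Hence $|P(x)|/(-\min P) \le \|P\|_\infty/(c_0\|P\|_\infty) = 1/c_0 =: C(k)$, establishing the claim.

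Combining the two factor estimates gives
\[
|w^\delta(x) - w_h(x)| \le C(k)\bigl(\|w_h - w\|_\infty + \delta\bigr),
\]
which is the assertion; the final remark about $\delta < h^{k+1}$ then follows because the right-hand side is then of the same order $O(h^{k+1})$ as the approximation error $\|w_h - w\|_\infty$. I expect the compactness argument giving $c_0>0$ (equivalently, that $-\min P$ is comparable to $\|P\|_\infty$ on zero-mean polynomials, with a constant depending only on $k$ by the scaling invariance) to be the only step requiring care; everything else is algebra.
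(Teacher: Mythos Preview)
Your argument is correct. The paper does not actually prove this lemma; it is stated with a citation to \cite{LW15} and no proof is given. Your self-contained derivation---algebraically rewriting $w^\delta - w_h$ as $(w_h-\bar w_j)\,(m_j-\delta)/(\bar w_j-m_j)$, bounding $\delta-m_j$ via the nonnegativity of $w$, and controlling the ratio $|w_h-\bar w_j|/(\bar w_j - m_j)$ by a compactness/norm-equivalence argument on zero-mean polynomials over the reference cell---is exactly the standard route and matches what is done in \cite{LW15}. The only point worth a one-line remark (which you implicitly handle) is that in the active case $m_j<\delta<\bar w_j$ the zero-mean polynomial $P=w_h-\bar w_j$ is necessarily nonzero, so the division by $-\min P$ is legitimate and the compactness bound applies.
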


\subsection{Algorithm}
The algorithm can be summarized in following steps.
\begin{itemize}
\item[1.] (Initialization) Project $c_i^{\rm in}(x)$ onto $V_h$, as formulated in (\ref{proj}),  to obtain $c_{ih}^{0}(x)$.
\item [2.] (Reconstruction)  From $c_{ih}^n(x)$,  apply,  if necessary, the reconstruction \eqref{ureconstruct} to $c_{ih}^n$ to ensure that in each cell $c_{ih}^n >\delta$.
\item[3.] (Poisson solver)  Solve (\ref{dg}c) to obtain $\psi_h^n$ subject to the modified boundary fluxes (\ref{fluxpsi}).
\item[4.] (Projection)  Solve (\ref{dg}b) to obtain $p_{ih}^n$.
\item[5.] (Update)  Solve (\ref{dg}a) to update $c_{ih}^{n+1}$ with some Runge-Kutta (RK) ODE solver.
\item[6.] Repeat steps 2-5 until final time $T$.
\end{itemize}

 The implementation details  are deferred to  Appendix A.


\section{Numerical Examples}
In this section, we present  a selected set of examples in order to numerically validate our DDG scheme.  In \S 5.1, we construct an example with exact solution known, and examine the order of accuracy by numerical convergence tests, while we quantify $l_1$ errors defined by
$$
\|u_h-u_{ref}\|_{l_1}= \sum_{j=1}^N \int_{I_j}|u_h(x) - u_{ref}(x)|dx,
$$
with the integral on $I_j$ evaluated by a $4$-point Gaussian quadrature method and $u_{ref}$ being the exact solution. Long time simulation is also performed to illustrate how the positivity of cell averages propagates when using  proper choices of $(\beta_0, \beta_1)$.    \S 5.2 is devoted to demonstrate the mass conservation,  energy dissipation and preservation of the steady state. In \S 5.3 and \S 5.4, we apply the DDG scheme to a non-monovalent system and a reduced single species system.

\subsection{Cell average and convergence test}
In $\Omega=[0,1]$, we consider
\begin{align*}
  \partial_t c_1 & = \partial_x (\partial_x c_1+  q_1 c_1 \partial_x \psi) +f_1, \\
  \partial_t c_2 & = \partial_x (\partial_x c_2+   q_2 c_2 \partial_x \psi) +f_2, \\
- \partial_x^2 \psi & =  q_1c_1+ q_2c_2,\\
\partial_x \psi(t,0)& =0,  \quad \partial_x \psi(t, 1)=-e^{-t}/60, \\
{\partial_x c_i} &+q_ic_i {\partial_x \psi}=0,
 \quad x=0, 1,
\end{align*}
with
\begin{align*}
 f_1 &=\frac{(50x^9-198x^8+292x^7-189x^6+45x^5)}{30e^{2t}}+\frac{ (-x^4+2x^3-13x^2+12x-2)}{e^{t}},   \\
 f_2&=\frac{(x - 1)(110x^9 - 430x^8 + 623x^7 - 393x^6+90x^5)}{60e^{2t}} + \frac{(x-1)(x^4 - 2x^3 + 21x^2 - 16x + 2)}{e^{t}}.
\end{align*}
This system, with $q_1=1$ and $q_2=-1$,  admits exact solutions
\begin{align*}
  c_1 & =x^2(1-x)^2e^{-t}, \\
 c_2  &= x^2(1-x)^3e^{-t}, \\
\psi & =  -(10x^7-28x^6+21x^5)e^{-t}/420.
\end{align*}
We also  set $\psi(t,0)=0$ to pick out a particular solution since $\psi$ is unique up to an additive constant. This extra condition is numerically enforced according to \eqref{fluxpsi}.

We first test positivity of cell averages  for $P^2$ polynomials with $(\beta_0,\beta_1)=(4,1/12)$ in (\ref{flux}) for $p_{ih}$. Note that the reconstruction is necessary in this example since $c_i=0<\delta$ at $x=0, 1$. 
Our simulation with the reconstruction \eqref{ureconstruct} up to $T=100$ indicates that the cell averages remain positive.

Table \ref{tab:ex1LT} displays both $l_1$ errors and orders of convergence when using $P^{k}$ elements at $T=0.1$. We  observe that order of convergence is roughly of $k+1$. Figure \ref{fig:ex1} shows the numerical solution at different times. In the top of Figure \ref{fig:ex1}, we observe that the numerical solutions (dots) match the exact solutions (solid line) at $t=1$ very well.   At $t=5$,  our numerical approximation still captures the solution  profile very well when  the magnitude of the concentrations is close to zero ($\sim10^{-4}$). In the presence of  the source terms $f_1$ and $f_2$, one should not expect to have either mass conservation or free energy decay.

\begin{table}[!htb]
\caption{Error table of Example 1 at $T=0.1$}
\begin{tabular}{ |c|l|c|c| c|c| c|c| }
\hline
$(k,\beta_0, \beta_1)$& h & $c_1$ error & order & $c_2$ error & order & $\psi$ error & order \\ \hline
\multirow{4}{*}{$(1,2,-)$}
 & 0.2  &      0.023279 &   --       &     0.031295   &  --    &       0.0033241&--      \\
 & 0.1  &     0.0037603 &    2.5043  &     0.0059588   &     2.2582     &      0.0009351   &     1.8578\\
 & 0.05  &  0.00065589  &  2.4414   &    0.0012548   &     2.1909      &     0.0002603    &   1.8718\\
 & 0.025 &  0.00012745  & 2.3635   &   0.00028581    &   2.1343       &   6.9808e-05      &   1.8987   \\ \hline
 \multirow{4}{*}{$(2,4,{1}/{12})$}
 & 0.2  &    0.0028937  &   --              &      0.0030675   &  --    &   0.0012417  &--      \\
 & 0.1  &   0.00018926 &      3.6436    &   0.00024835   &    3.4352    &   0.00010034 &   3.4636  \\
 & 0.05  &  1.391e-05   &     3.4981  &    2.2705e-05   &      3.3395     & 9.1444e-06&      3.3808  \\
 & 0.025 & 1.4824e-06   &   3.2301   &   2.4238e-06    &     3.2277     &  9.2476e-07  &   3.3057   \\ \hline
 \multirow{4}{*}{$(3,15,{1}/{4})$}

  & 0.2  &    0.0030963 &   --                 &     0.0029231    &  --    &     0.0011002  &--      \\
 & 0.1  &   0.00023282 &    4.0764        &    0.00021924    &    3.9254   &    7.4195e-05   &   4.3897  \\
 & 0.05  & 1.7512e-05  &    4.2480         &     1.6857e-05  &    4.0196    &     5.4161e-06 &     4.6393  \\
 & 0.025 &6.4483e-07  &    4.7633       &      8.3344e-07  &   4.3381    &      1.1946e-07 &    5.5027  \\ \hline
\end{tabular}
\label{tab:ex1LT}
\end{table}

\begin{figure}[!htb]
\caption{Numerical solution versus exact solution at $t=1$ and $t=5$}
\centering
\begin{tabular}{cc}
\includegraphics[width=\textwidth]{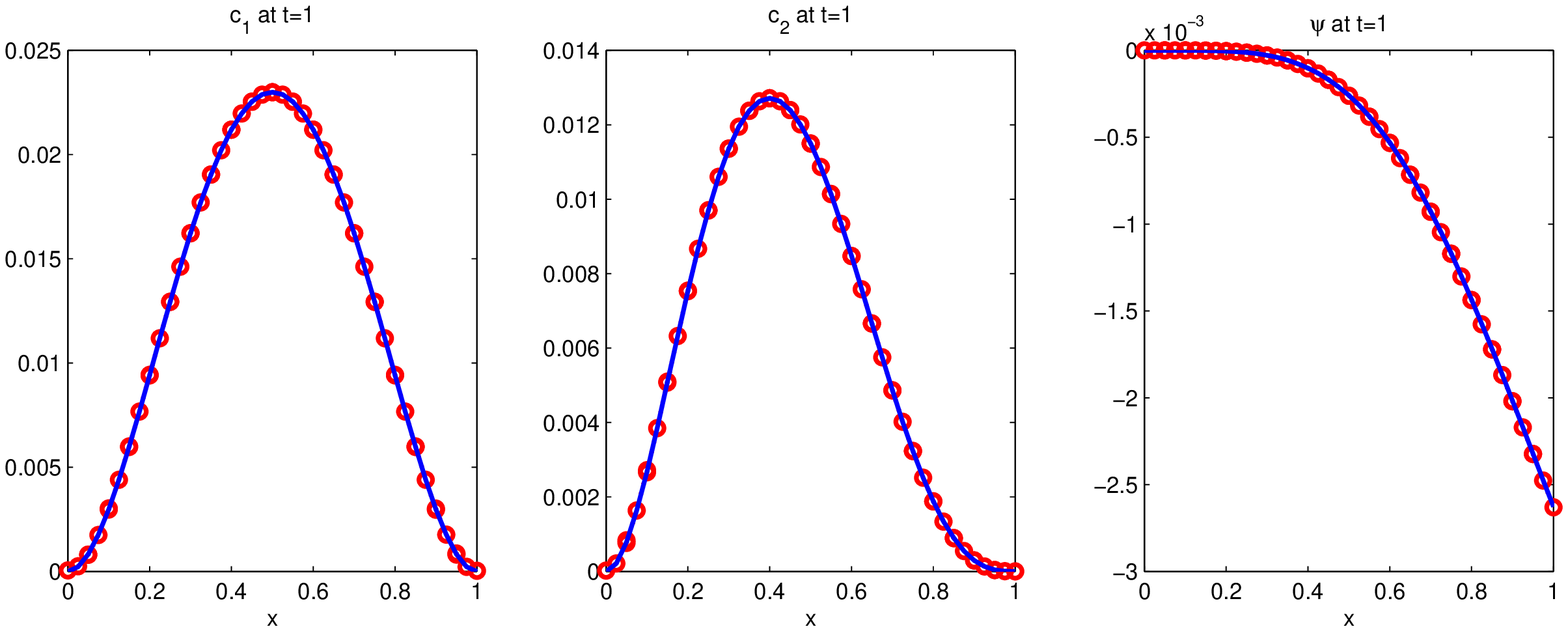}\\
\includegraphics[width=\textwidth]{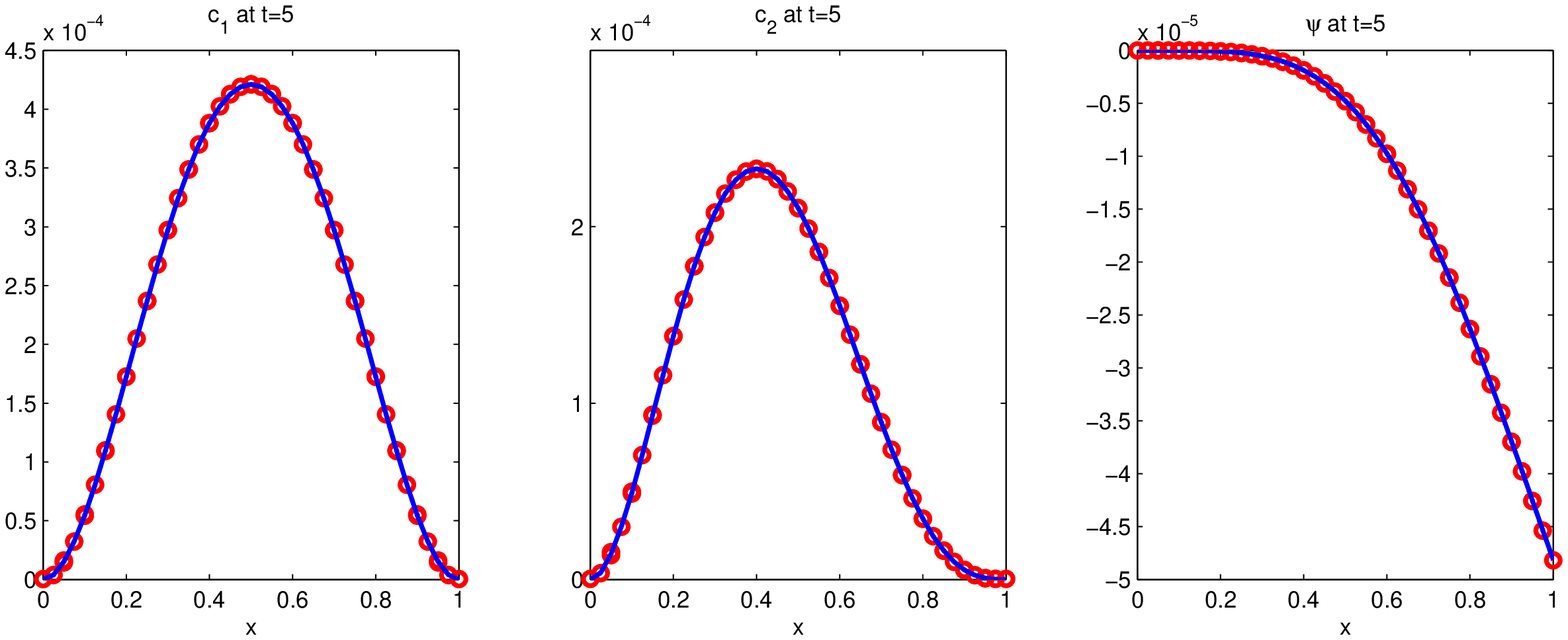} 
\end{tabular}
{Solid line: exact solutions; dots: numerical solutions}
\label{fig:ex1}
\end{figure}

\subsection{Mass conservation and free energy dissipation}
We consider the following problem on the domain $[0,1]$,
\begin{align*}\label{PNPtwo}
  \partial_t c_i &=\partial_x (\partial_x c_i+ q_ic_i\partial_x \psi), \quad i=1, 2 \\ 
- \partial_x^2\psi&= q_1c_1+q_2c_2, \\ 
   {\partial_x \psi}=0,\quad &{\partial_x c_i}=0,\quad i=1,2,\quad x=0, 1,
\end{align*}
where $q_1$ and $q_2$ are set to be $1$ and $-1$, respectively, with initial  conditions
\begin{align*}
 \quad c_1^{\rm in}(x)=1+\pi\sin(\pi x), \quad c_2^{\rm in}(x)= 4-2x ,
\end{align*}
which satisfies the compatibility condition \eqref{compatibility}.

With zero flux for concentration $c_1$ and $c_2$, this model example is to test the conservation of total mass and free energy decay. In Figure \ref{fig:ex2} (top), we see the snapshots of $c_1$, $c_2$ and $\psi$ at $t=0, 0.01, 0.1,0.8, 1$. We observe that the solutions at $t=0.8$ and $t=1$ are indistinguishable. Obviously the solution is converging to the steady state, which has constant $c_1=3$, $c_2=3$ and $\psi=0$. Figure \ref{fig:ex2} (bottom) shows the energy decay (see the change on the right vertical axis)  and conservation of mass (see the left vertical axis). We see that the total mass of $c_1$ and $c_2$ stays constant all the time while the free energy is decreasing monotonically. In fact the free energy levels off after $t=0.2$, at which the system is already in steady state. 

\begin{figure}[!htb]
\caption{Temporal evolution of the solutions}
\centering
\begin{tabular}{cc}
\includegraphics[width=1.05\textwidth]{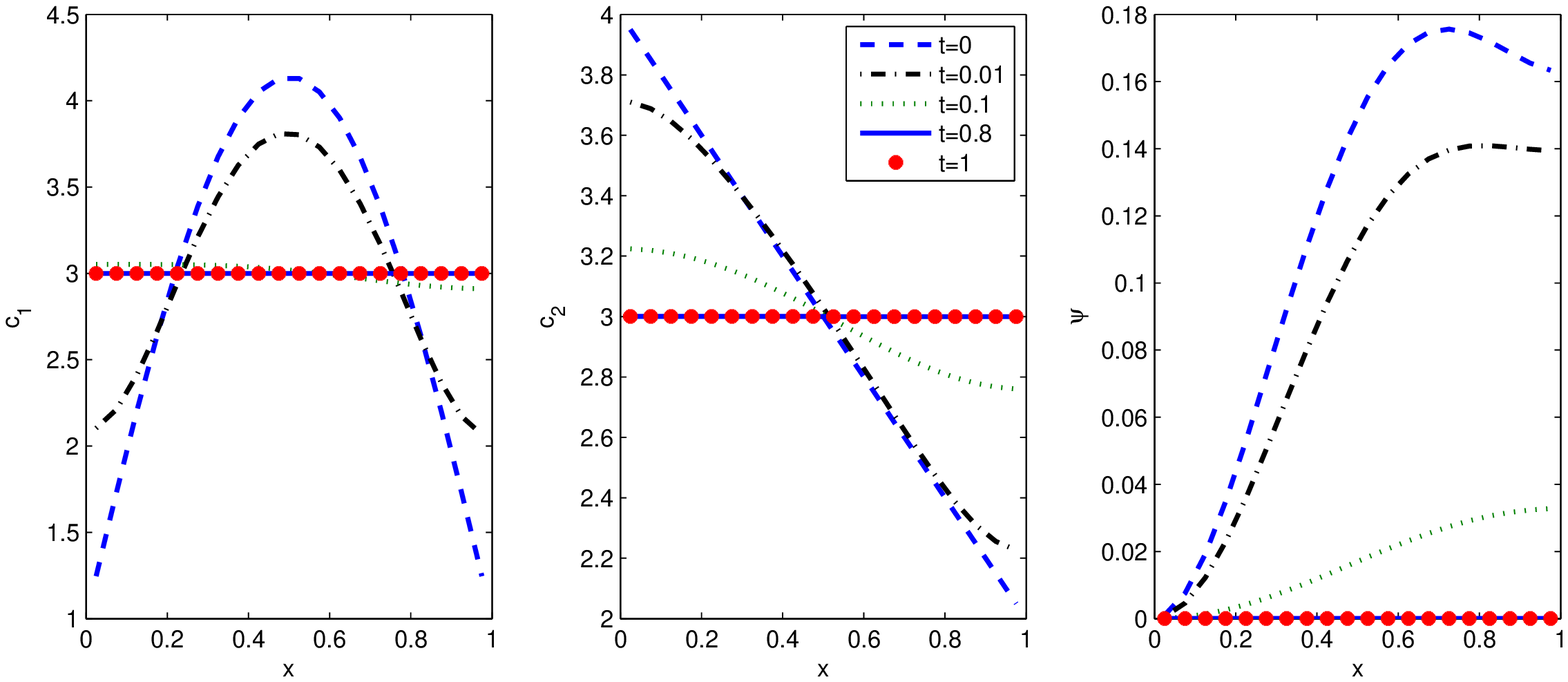}\\
\includegraphics[width=\textwidth]{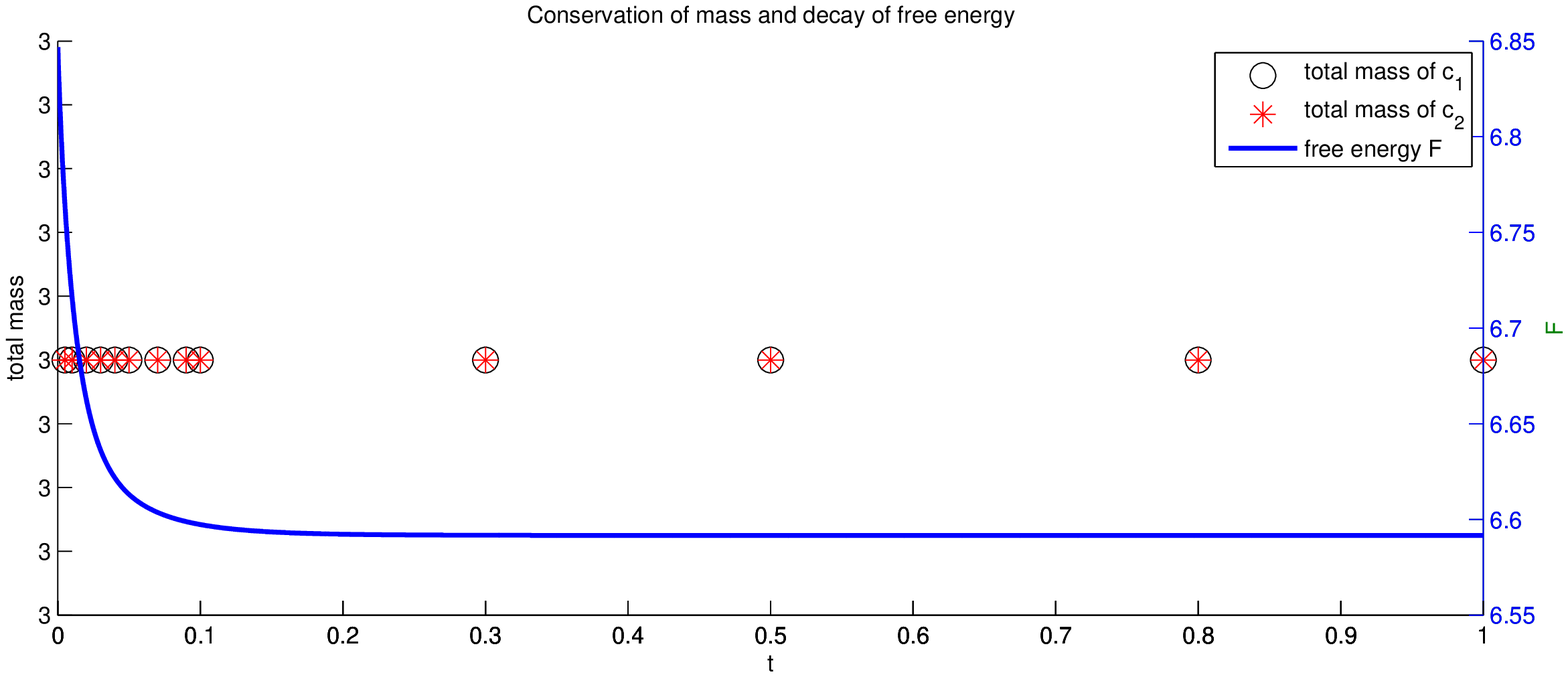} 
\end{tabular}
\label{fig:ex2}
\end{figure}

\subsection{Non-monovalent system and nonzero fixed charge}
We consider the following non-monovalent system (monovalent if $q_1=-q_2=1$) with nonzero fixed charge $\rho_0$ in $[0,1]$.
\begin{align*}
 \partial_t c_1&=\partial_x (\partial_x c_1 + q_1c_1\psi_x ),  \\
 \partial_t c_2&=\partial_x (\partial_x c_2 + q_2 c_2\psi_x ),   \\
 -\partial_x^2\psi &= q_1c_1+q_2c_2 +\rho_0,
\end{align*}
with $q_1=1$, $q_2=-2$ and $\rho_0=12(x-0.5)^2$. The initial and boundary conditions are
\begin{align*}
 \quad c_1^{\rm in}(x)&=2+12(x-0.5)^2, \quad c_2^{\rm in}(x)= 1+2x, \\
& {\partial_x c_i} +q_ic_i {\partial_x \psi} =0,\quad x=0, 1,\\
& \partial_x\psi(t, 0)=\partial_x \psi(t, 1)=0,
\end{align*}
where the compatibility condition \eqref{compatibility} is satisfied since $\int_0^1(q_1c_1^{\rm in} +q_2c_2^{\rm in}+\rho_0)dx=0$.

In Figure \ref{fig:ex3} (top) are the snapshots of $c_1$, $c_2$ and $\psi$ at $t=0, 0.01, 0.1, 0.8, 1$.  Figure \ref{fig:ex3} (bottom) shows the energy decay (see the change on the right vertical axis)  and conservation of mass (see the left vertical axis). The concentrations $c_1$ and $c_2$ have different total mass but both are conserved in time.  We observe that the system is at steady states after $t=0.2$, which are no longer constants due to the nonzero fixed charge $\rho_0$.

\begin{figure}[!htb]
\caption{Non-monovalent system and nonzero fixed charge}
\centering
\begin{tabular}{cc}
\includegraphics[width=1.05\textwidth]{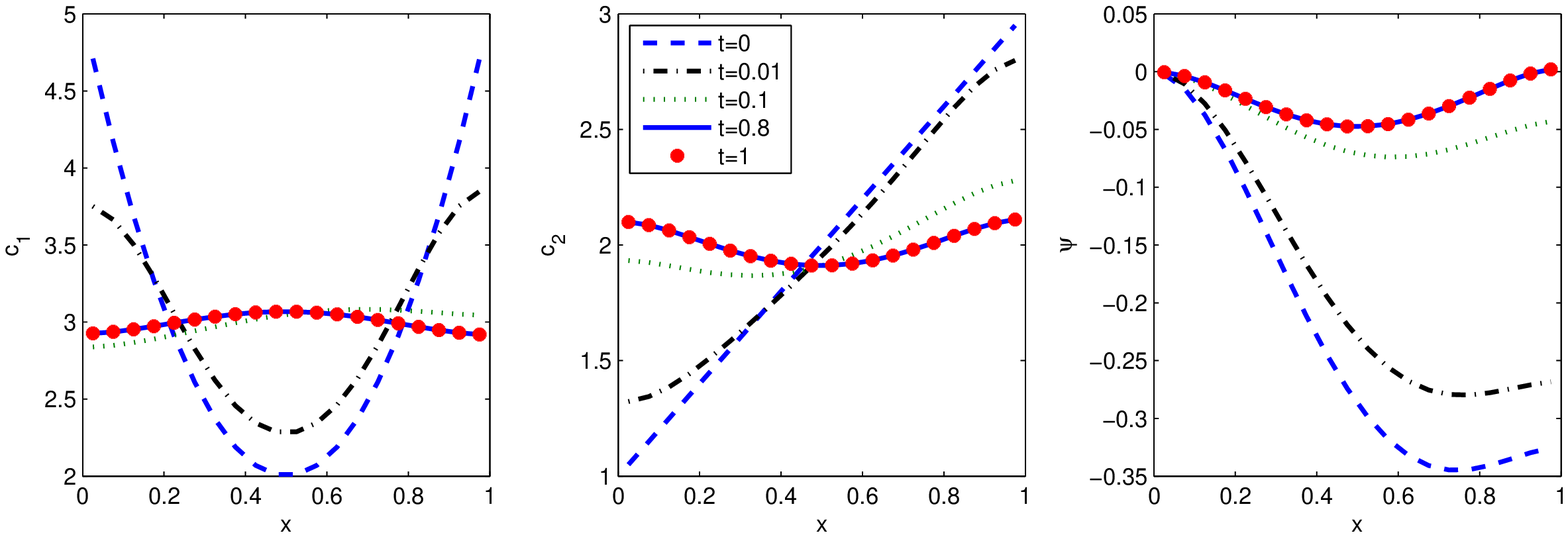}\\
\includegraphics[width=\textwidth]{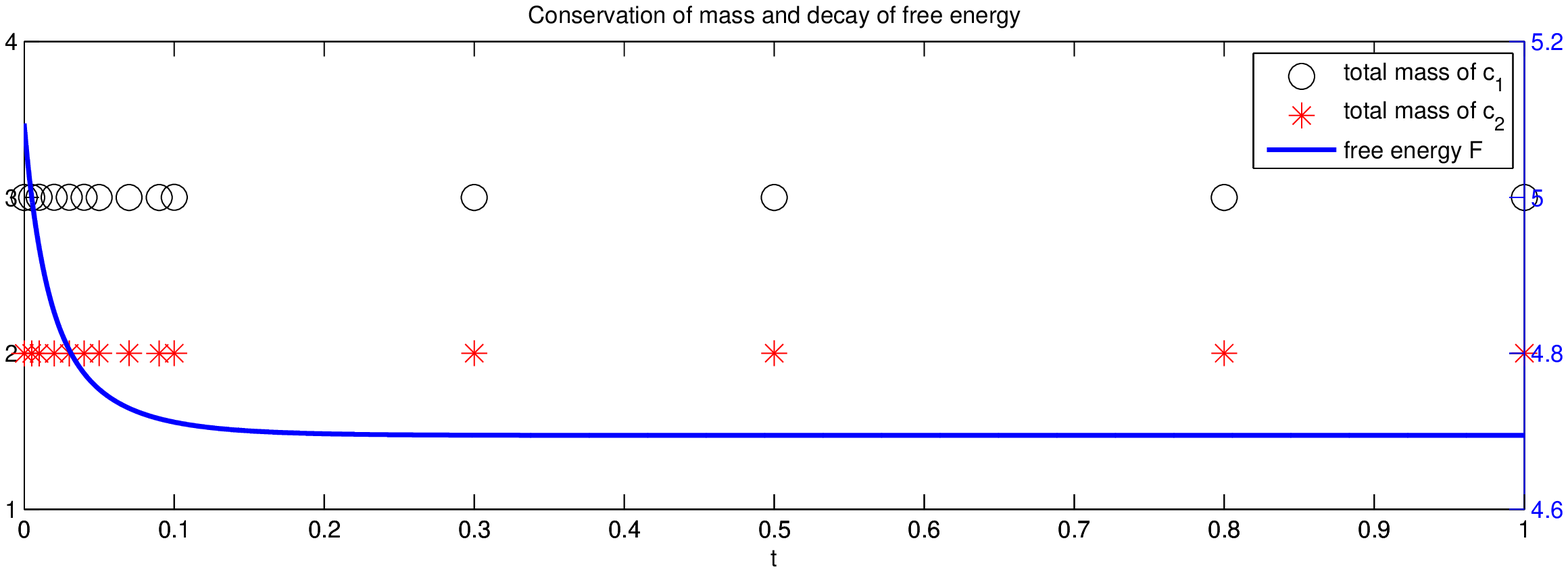} 
\end{tabular}
\label{fig:ex3}
\end{figure}
\subsection{Single species} Finally we consider the reduced model \eqref{PNP1} of  single species.
The problem is
\begin{align*}
 \partial_t c & = \partial_x(\partial_x c +  c  \partial_x \psi),  \quad x\in [0, 1], \;t>0, \\
-  \partial_x^2 \psi & =  c ,  \quad x\in [0, 1],
\end{align*}
subject to initial and boundary conditions
$$
c(0, x)=2-x; \quad \partial_x\psi(t,0) =0;  \quad \partial_x\psi(t, 1)=-3/2; \quad \partial_xc+c\partial_x\psi=0,\quad x=0,1,
$$
which satisfies the compatibility condition \eqref{compatibility}.

Figure \ref{fig:ex4}  displays the dynamic behavior of $c$ (top left) and of $\psi$ (top right), as well as the mass conservation and energy decay (bottom). Note that the steady state of the concentration $c$ is not a constant in this case due to the nonzero boundary flux $\partial_x\psi(t, 1)$, i.e., ions are attracted to the boundary $x=1$.

\begin{figure}[!htb]
\caption{Single species}
\centering
\begin{tabular}{cc}
\includegraphics[width=0.8\textwidth]{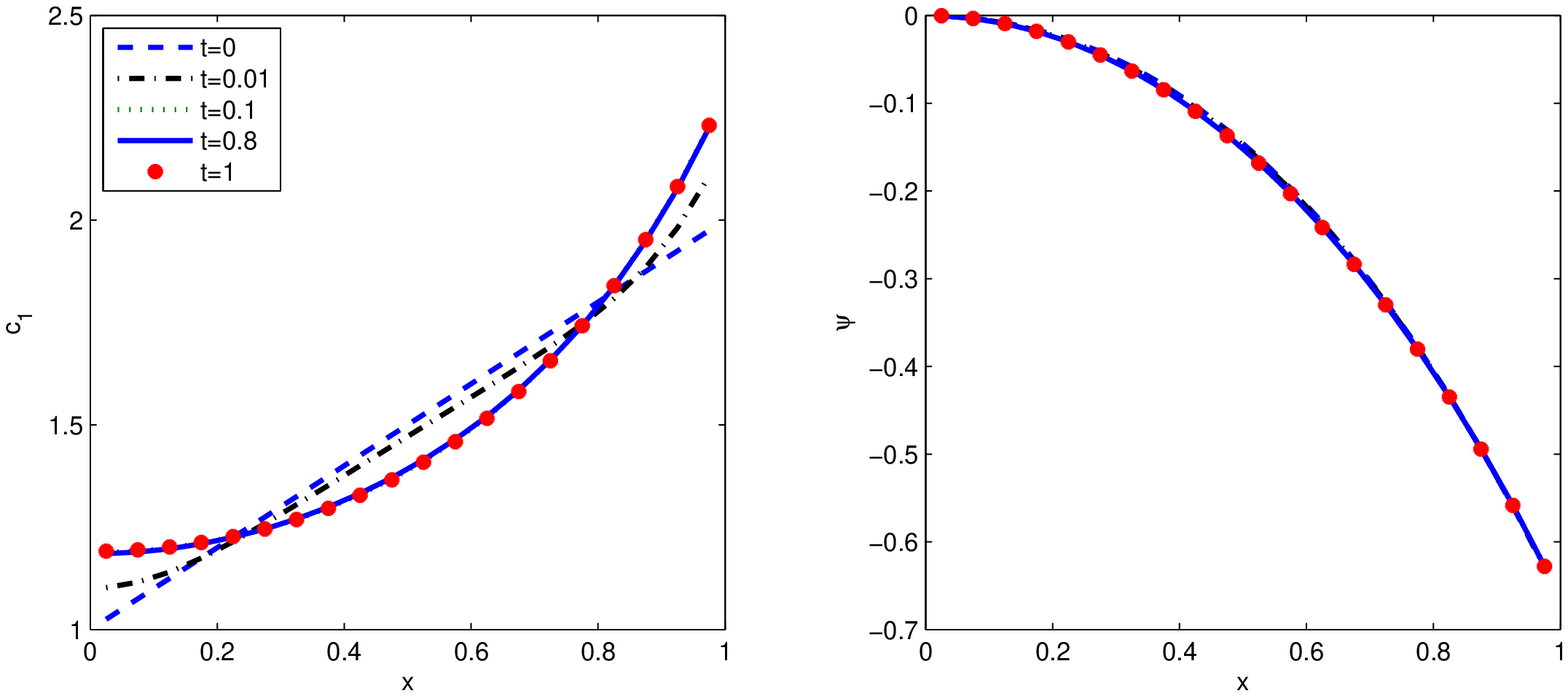}\\
\includegraphics[width=0.8\textwidth]{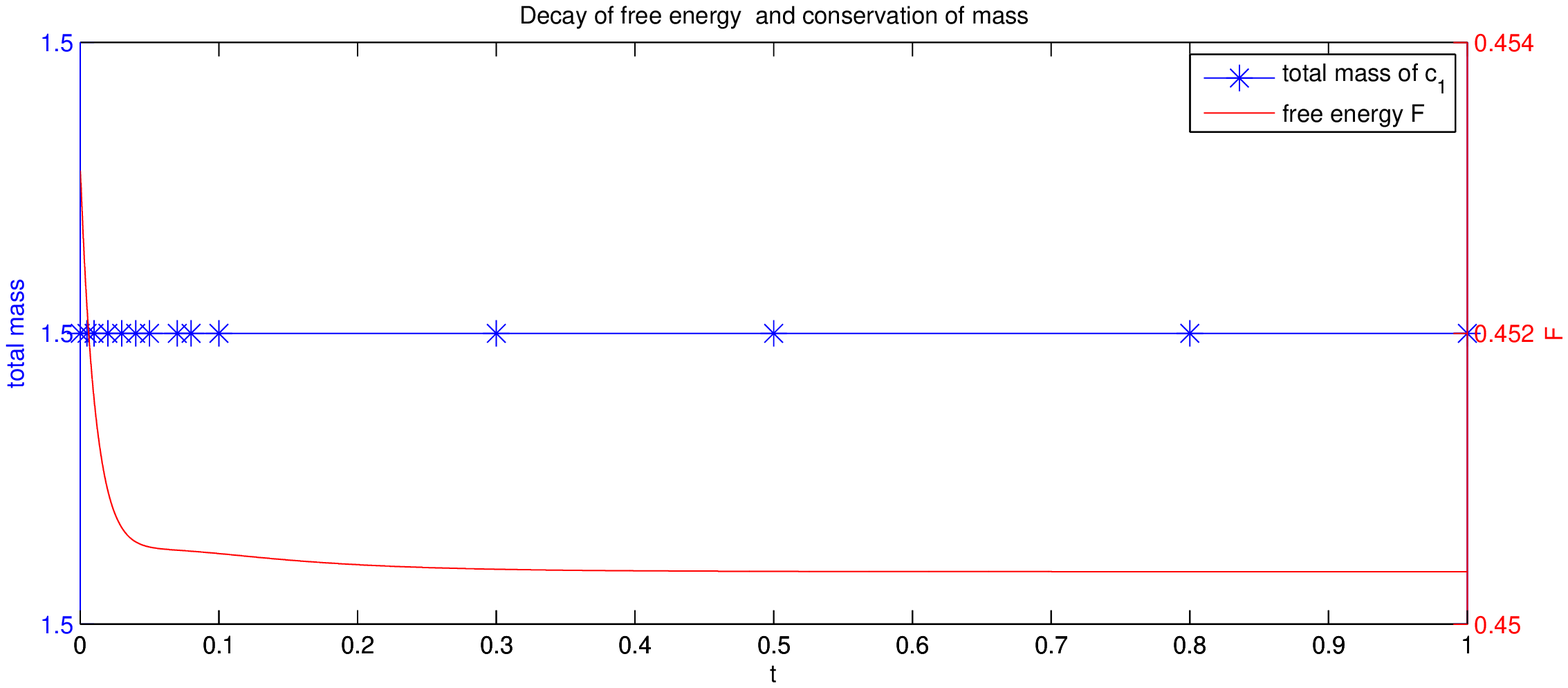} 
\end{tabular}
\label{fig:ex4}
\end{figure}

\section{Concluding remarks}
In this paper, we developed an arbitrary high order DG method to solve initial boundary value problems for the Poisson-Nernst-Planck system,  which is a mean field type model for concentrations of chemical species. The semi-discrete DG method has been shown to conserve the mass and satisfy the corresponding discrete  free energy dissipation law.  The fully discrete DG method with the Euler forward time discretization also satisfies the free energy dissipation law under a  restriction on the time step relative to the square of the spatial mesh size. Moreover, the free energy does not increase at each time step for the RK2 method (\ref{RK2}) as well as for the strong stability preserving (SSP) Runge-Kutta method of any high order. The method also preserves the steady states. These nice properties are also confirmed by our numerical tests. For proper choices of numerical flux parameters $(\beta_0, \beta_1)$ in our numerical experiments, we find that each cell average remains positive if it is initially positive. This is needed to update numerical solutions at each time step so they are positive.  In future work we will apply our method to multi-dimensional problems and explore  more efficient ways to preserve positivity of numerical solutions.

\section*{Acknowledgments}  This work was supported by the National Science Foundation under Grant DMS1312636 and by NSF Grant RNMS (Ki-Net) 1107291.

\appendix
  \renewcommand{\theequation}{A-\arabic{equation}}
  \setcounter{equation}{0}  
  \section{} 

In this appendix, we give details related to the implementation of the method.

The $k$th order basis functions in a 1-D standard reference element $\xi\in [-1, 1]$ are taken as
the Legendre polynomials $\{L_l(\xi)\}_{l=0}^k$. The numerical solutions $c_{ij}(t, x)$, $p_{ij}(t, x)$ and $\psi_{i}(t, x)$ in each cell $I_j$, after dropping subscript $h$ for convenience,  can be expressed as
\begin{align*}
 c_{ij}(t, x) &=\sum_{l=0}^kc_{ij}^l(t)L_l(\xi)=L^\top(\xi)c_{ij}(t), \\
 \psi_j(t, x) &=\sum_{l=0}^k\psi_j^l(t)L_l(\xi)=L^\top(\xi)\psi_{j}(t), \\
p_{ij}(t, x) & =\sum_{l=0}^kp_{ij}^l(t)L_l(\xi)=L^\top(\xi)p_{ij}(t),
\end{align*}
using the map $x=x_j+\frac{h}{2}\xi$, with notation $L^\top=(L_0, L_1, \cdots, L_k)$ and $c_{ij}=(c_{ij}^0, \cdots, c_{ij}^k)^\top$.  With such an expression, the numerical fluxes at $x_{j+1/2}$ become
\begin{align*} \notag
h \widehat{\partial_xp_{i}} & =\beta_0 (L^\top(-1)p_{i,j+1}-L^\top(1) p_{ij}) +(L^\top_\xi(-1)p_{i,j+1}+L^\top_\xi(1)p_{ij}) \\
& \qquad +4\beta_1(L^\top_{\xi\xi}(-1) p_{i,j+1} -L^\top_{\xi\xi} (1) p_{ij}),\\
h \widehat{\partial_x\psi} & =\beta_0 (L^\top(-1)\psi_{j+1}-L^\top(1) \psi_{j})+L^\top_\xi(-1) \psi_{j+1} +L^\top_\xi(1)\psi_j,\\
\widehat \psi & = \frac{1}{2} (L^\top(1) \psi_j+L^\top(-1) \psi_{j+1}).
\end{align*}
This when substituted into (\ref{dg}c) gives
\begin{equation}\label{ma}
A\psi_{j-1} +B\psi_j +C\psi_{j+1} = hK \
  \sum_{i=1}^m q_i c_{ij} +  \frac{h^2}{2} \sum_{n=1}^{Q_1} \omega_n \rho_0(x_j+hs_n/2) L(s_n),
, \quad 2\leq j\leq N-1,
\end{equation}
where
\begin{align*}
A =& -L(-1)(\beta_0L(1)-L_\xi(1))^\top-L_\xi(-1)L^\top(1),\\
B =& 2 \int_{-1}^1 L_\xi L_\xi^\top d\xi +L(-1)(\beta_0L(-1)+L_\xi(-1))^\top {+} L(1)(\beta_0L(1)-L_\xi(1))^\top\\
      &+L_\xi(-1)L^\top(-1)-L_\xi(1)L^\top(1),\\
C =&   -L(1)(\beta_0L(-1)+L_\xi(-1))^\top+L_\xi(1)L^\top(-1),\\
K  =& \frac{h}{2}\int_{-1}^1 L(\xi)L^\top(\xi)d\xi.
\end{align*}
Here $Q_1$-point Gauss quadrature rule on the interval $(-1, 1)$ is used to integrate $\rho_0(x_j+h\xi/2)L(\xi)$. We choose $Q_1\geq \frac{k+2}{2}$ points so that the quadrature rule has accuracy of at least $O(h^{k+2})$ order. Boundary conditions are specified in section \ref{BC}. Note that the resulting  $(k+1)N\times (k+1)N$  matrix  is a sparse block matrix, which  can be inverted efficiently.

We  then compute (\ref{dg}b) by the same quadrature rule
\begin{equation}
K p_{ij}=q_iK \psi_j + \frac{h}{2} \sum_{n=1}^{Q_1} \omega_n log(L^\top(s_n)c_{ij})L(s_n).\label{getq}
\end{equation}

Finally we simplify  (\ref{dg}a) to get the following ODE system
\begin{equation}\label{3}
K \dot c_{ij}=\frac{2}{h} R_1+\frac{1}{2h}( R_2+R_3),  \quad 2\leq j \leq N-1,
\end{equation}
where
\begin{align*}
R_1 & =-\sum_{n=1}^{Q_2}  \omega_n L^\top(s_n)c_{ij} L_\xi^\top(s_n)p_{ij} L_\xi(s_n),\\
R_2 & =(L^\top(1)c_{ij}+L^\top(-1)c_{i,j+1})( - D^\top  p_{ij}+ E^\top p_{i,j+1})L(1) \\
& \qquad - (L^\top(1)c_{i,j-1}+L^\top(-1)c_{ij})( - D^\top p_{i,j-1}+ E^\top p_{ij})L(-1)=R_2^+-R_2^-, \\
R_3 & =(L^\top(1)c_{ij}+L^\top(-1)c_{i,j+1})( L^\top(1) p_{ij}- L^\top(-1)p_{i,j+1})L_\xi(1)\\
&\qquad   {+} (L^\top(1)c_{i,j-1}+L^\top(-1)c_{ij})(L^\top(1) p_{i,j-1}- L^\top({-1})p_{ij})L_\xi(-1)=: R_3^+ {+} R_3^-.
\end{align*}
Here
$$
D=\beta_0L(1)-L_\xi(1)+4\beta_1L_{\xi\xi}(1), \quad  E =\beta_0L(-1)+L_\xi(-1)+4\beta_1L_{\xi\xi}(-1).
$$
In the evaluation of $R_1$,  we choose $Q_2 \geq \frac{k+4}{2}$ points. In implementation, since $Q_2>Q_1$, we simply use $Q_2$ Gaussian quadrature points in integrating both $\rho_0$ and $\log(c_i)$.

At two end cells, $R_1$ is still valid. The zero flux conditions (\ref{PNP}d) are used to obtain $R_2$ and $R_3$, i.e., at $j=1$, we use  $R_2=R_2^+, R_3=R_3^+$, and at $j=N$ we use  $R_2=-R_2^-, R_3=-R_3^-$.
The discrete ODE system is then solved by the RK2 method (\ref{RK2}).

\end{document}